\theoremstyle{definition}
\newtheorem{theorem}{Theorem}
\newtheorem{definition}[theorem]{Definition}
\newtheorem{corollary}[theorem]{Corollary}
\newtheorem{lemma}[theorem]{Lemma}
\newtheorem{remark}[theorem]{Remark}
\newtheorem{notation}[theorem]{Notation}
\title{Color-avoiding connected spanning subgraphs with minimum number of edges}
\author{J\'{o}zsef Pint\'{e}r\thanks{Department of Stochastics, Institute of Mathematics, Budapest University of Technology and Economics.} \textsuperscript{,}\thanks{HUN-REN--BME Stochastics Research Group.} \and Kitti Varga\thanks{Department of Computer Science and Information Theory, Faculty of Electrical Engineering and Informatics, Budapest University of Technology and Economics.} \textsuperscript{,}\thanks{HUN-REN--ELTE Egerv\'{a}ry Research Group.} \textsuperscript{,}\thanks{MTA-ELTE Momentum Matroid Optimization Research Group.}}
\begin{document}

\maketitle

\begin{abstract}
 We call a (not necessarily properly) edge-colored graph edge-color-avoiding connected if after the removal of edges of any single color, the graph remains connected. For vertex-colored graphs, similar definitions of color-avoiding connectivity can be given.
 
 In this article, we investigate the problem of determining the maximum number of edges that can be removed from either an edge- or a vertex-colored, color-avoiding connected graph so that it remains color-avoiding connected. First, we prove that this problem is NP-hard, and then, we give a polynomial-time approximation algorithm for it. To analyze the approximation factor of this algorithm, we determine the minimum number of edges of color-avoiding connected graphs on a given number of vertices and with a given number of colors. Furthermore, we also consider a generalization of edge-color-avoiding connectivity to matroids.
\end{abstract}

\begin{quote}
{\bf Keywords: approximation algorithms, color-avoiding connectivity, complexity, matroids, spanning subgraphs}
\end{quote}
\vspace{5mm}

\section{Introduction}

The robustness of networks against random errors and targeted attacks has attracted a great deal of research interest \cites{previouswork1, previouswork2, previouswork3, previouswork4}. The robustness of a network refers to its capacity to maintain some degree of connectivity after the removal of some edges or vertices of the network. 

Although the standard frameworks of error or attack tolerance in complex networks can be really useful in industrial practices, we can develop a more efficient framework if we take into account that some parts of the network might share some vulnerabilities. A characteristic example is the case of public transport networks, where the edges of the underlying graph are colored according to the mode of transportation such as rail, road, ship or air transport. Experience shows that excessive snowing usually has a greater impact on the railway than on underground transportation. In extreme cases, these weather conditions might even paralyze the whole railway traffic, which we can think of (from a network theoretical point of view) that all the edges corresponding to railway transportation disappear from the network. Thus it is useful to know which vertices in the network are available from each other without using any edges corresponding to the railway transportation. In this manner, we can consider the network reliable if even after the elimination of any single mode of transportation, the whole or a significant part of the network remains connected. Another example is the case of communication networks where the vertices represent routers, which are colored according to which country the corresponding router is registered to. If, for safety reasons, we want to ensure that no country can intercept our message, then we need multiple paths in the network between the sender and the receiver such that each country is avoided in at least one of these paths, and send our message divided into many parts through these paths.

\paragraph{Previous work.} The above mentioned concepts were introduced as color-avoiding connectivity, first for vertex-colored graphs by Krause et al.~\cite{Krause1} in 2016 with the motivation to develop a framework which can treat the heterogeneity of multiple vulnerable classes of vertices, and they demonstrated how this can be exploited to maintain functionality of a complex network by utilizing multiple paths, mostly on communication networks. Krause et al.\ extended this original theory in~\cite{Krause2}. They analyzed how the color frequencies affect the robustness of the networks. For unequal color frequencies, they found that the colors with the largest frequencies control vastly the robustness of the network, and colors of small frequency only play a little role. In~\cite{Krause3}, color-avoiding connectivity was further extended from vertex-colored graphs to edge-colored ones, and similar problems as those for vertex-colored graphs were also considered.

Ráth et al.~\cite{rath} investigated the color-avoiding bond percolation of edge-colored Erd\H{o}s--R\'{e}nyi random graphs. They analyzed the fraction of vertices contained in the giant edge-color-avoiding connected component and proved that its limit can be expressed in terms of probabilities associated to edge-colored branching process trees. The work~\cite{lichevschapira} of Lichev and Schapira includes some simplification and generalization of these results as well as some finer results on the size of the largest edge-color-avoiding connected component. Lichev also described the phase transition of the largest edge-color-avoiding connected component between the supercritical and the intermediate regime~\cite{lichev}.

Molontay and Varga~\cite{VargaMolontay} determined the computational complexity of finding the color-avoiding connected components of a graph. They also generalized the concept of color-avoiding connectivity by making the vertices or edges more vulnerable by assigning a list of colors to them. They proved that for certain concepts, the color-avoiding connected components can be found in polynomial time, while for others this problem is NP-hard.

Edge-color-avoiding colorings were also studied under the name courteous edge-colorings by DeVos et al.~\cite{seymour} in 2006. Graphs with 1-courteous edge-colorings are exactly the edge-color-avoiding connected graphs. In that article, they gave interesting upper bounds on the number of colors needed to courteously color an arbitrary graph.

\paragraph{Our results.} When operating a network, we might want to reduce the maintenance cost while retaining some desired properties of the network. In this work, we investigate the problem of finding color-avoiding connected spanning subgraphs with minimum number of edges in edge- or vertex-colored graphs. We also consider a generalization of edge-color-avoiding connectivity to matroids and investigate a similar problem. First, we prove that all these problems are NP-hard, then we present polynomial-time approximation algorithms for them. In this article, we extend the results of the master's thesis~\cite{diplomamunka} to matroids, and also improve and more accurately analyze our algorithms.

\section{Color-avoiding connected graphs and courteously colored matroids}

In this article, we study color-avoiding connected graphs and courteously colored matroids. All graphs considered in this article are not necessarily properly colored\footnote{A proper edge-coloring is an assignment of colors to the edges in which no two incident edges receive the same color. A proper vertex-coloring is an assignment of colors to the vertices in which no two adjacent vertices receive the same color.}. First, we recall some important definitions and notation.

The set of positive integers is denoted by $\mathbb{Z}_+$. For two sets $X$ and $Y$, the \emph{set difference} of $X$ and $Y$ is denoted by $X-Y$. Given a graph $G = (V, E)$ and a subset of edges $E' \subseteq E$, let $G - E'$ denote the graph that is obtained from $G$ by deleting the edges of $E'$ from it. If $E' = \{ e \}$ for some edge $e$ of $G$, then $G - \{ e \}$ is abbreviated by $G - e$.
A graph is called \emph{$k$-edge-connected} if it remains connected whenever fewer than $k$ edges are removed. Similarly, a graph is called \emph{$k$-vertex-connected} if it has more than $k$ vertices and remains connected whenever fewer than $k$ vertices are removed.

A \emph{matroid} $\mathcal{M} = (S, \mathcal{I})$ is a pair formed by a finite (possibly empty) \emph{ground set} $S$ and a family of subsets $\mathcal{I} \subseteq 2^S$ called \emph{independent sets} satisfying the \emph{independence axioms}:
\begin{enumerate}
 \item[(I1)] $\emptyset \in \mathcal{I}$,
 \item[(I2)] for any $X,Y \subseteq S$ with $X \subseteq Y$, if $Y \in \mathcal{I}$, then $X \in \mathcal{I}$,
 \item[(I3)] for any $X,Y \in \mathcal{I}$ with $|X| < |Y|$, there exists $e \in Y-X$ such that $X \cup \{e\} \in \mathcal{I}$.
\end{enumerate}

The maximal independent subsets of $S$ are called \emph{bases}. The \emph{rank} of a set $X \subseteq S$ in the matroid, denoted by $r(X)$, is the maximum size of an independent subset of $X$. The \emph{rank of the matroid} is the rank of its ground set. If $\mathcal{M}$ is a matroid on the ground set $S$ and $T \subseteq S$, then the \emph{restriction} of $\mathcal{M}$ to $T$, or in other words, the \emph{deletion} of $S-T$ from $\mathcal{M}$, is the matroid $\mathcal{M}\big|_T \colonequals (T, \mathcal{I'})$, or also denoted by $\mathcal{M} \setminus (S - T) \colonequals (T, \mathcal{I'})$, where $\mathcal{I}' \colonequals \{ X \subseteq T \mid X \in \mathcal{I} \}$. Furthermore, if the rank of $\mathcal{M}\big|_T$ equals that of $\mathcal{M}$, i.e.\ $r(T) = r(S)$, then we say that $\mathcal{M}\big|_T$ is a \emph{rank-preserving restriction} of $\mathcal{M}$. A \emph{coloring} of a matroid is an arbitrary assignment of colors to the elements of its ground set. A \emph{graphic matroid} is a matroid whose independent sets can be represented as the edge sets of forests of a graph. If the underlying graph is connected, then the bases of the graphic matroid are the spanning trees of the graph; for an example, see Figure~\ref{fig:graphic_matroid}.

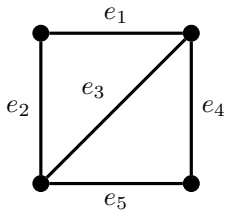
\begin{figure}[H]
\centering
\begin{tikzpicture}[scale=2]
 \tikzstyle{vertex}=[draw,circle,fill,minimum size=6,inner sep=0]
 
 \node[vertex] (a1) at (0,0)  {};
 \node[vertex] (b1) at (0,1) {};
 \node[vertex] (c1) at (1,1) {};
 \node[vertex] (d1) at (1,0) {};
 
 \draw[very thick] (a1) -- (b1) node[midway, left] {$e_2$};
 \draw[very thick] (c1) -- (d1) node[midway, right] {$e_4$};
 \draw[very thick] (b1) -- (c1) node[midway, above] {$e_1$};
 \draw[very thick] (a1) -- (d1) node[midway, below] {$e_5$};
 \draw[very thick] (a1) -- (c1) node[midway, above left] {$e_3$};
\end{tikzpicture}
\caption{An example for a graphic matroid: the ground set of the matroid is $\{ e_1, e_2, e_3, e_4, e_5 \}$ and the non-independent sets are $\{ e_1, e_2, e_3 \}$, $\{ e_3, e_4, e_5 \}$, $\{ e_1, e_2, e_3, e_4 \}$, $\{ e_1, e_2, e_3, e_5 \}$, $\{ e_1, e_2, e_4, e_5 \}$, $\{ e_1, e_3, e_4, e_5 \}$, $\{ e_2, e_3, e_4, e_5 \}$, and $\{ e_1, e_2, e_3, e_4, e_5 \}$.}
\label{fig:graphic_matroid}
\end{figure}

Since the number of independent sets can be exponential in the size of the ground set, for matroid algorithms it is usually assumed that the matroid is accessed through an \emph{oracle}. An oracle can be thought of as a black box that, given an input set $X \subseteq S$, outputs certain properties of the set, e.g.\ whether $X$ is independent or not (\emph{independence oracle}), or the rank of $X$ (\emph{rank oracle}). Then the complexity of a matroid algorithm is measured by the number of oracle calls and other conventional elementary steps. There are various types of oracles having the same computational power in the sense that any of them can be simulated by using a polynomial number of calls to any of the others, measured in terms of the size of the ground set. For further details, we refer the interested reader to~\cites{robinson1980computational,hausmann1981algorithmic,coullard1996independence}. Here we assume that a matroid is accessed through an independence oracle that can determine whether a subset of the ground set is independent or not.

Now we present the definitions of color-avoiding connectivity in edge- and vertex-colored graphs and a generalization of edge-color-avoiding connectivity to matroids.

\begin{definition}
 We say that an edge-colored graph $G$ is \emph{edge-color-avoiding connected} if after the removal of the edges of any single color from $G$, the remaining graph is connected.
\end{definition}

For two small examples on the definition of edge-color-avoiding connectivity, see Figure~\ref{fig:edgeexample}.

\begin{figure}[H]
\centering
\begin{tikzpicture}[scale=2]
 \tikzstyle{vertex}=[draw,circle,fill,minimum size=10,inner sep=0]
 \tikzset{paint/.style={draw=#1!50!black, fill=#1!50}, decorate with/.style = {decorate, decoration={shape backgrounds, shape=#1, shape size = 5pt, shape sep = 6pt}}}
 \tikzstyle{edge_red}=[draw, decorate with = isosceles triangle, paint = red]
 \tikzstyle{edge_blue}=[draw, decorate with = rectangle, decoration = {shape size = 4pt, shape sep = 6.5pt}, paint = blue]
 \tikzstyle{edge_green}=[draw, decorate with = diamond, paint = green]
 \tikzstyle{edge_yellow}=[draw, decorate with = star, paint = yellow]
 
 \node[vertex] (a1) at (0,0)  {};
 \node[vertex] (b1) at (0,1) {};
 \node[vertex] (c1) at (1,1) {};
 \node[vertex] (d1) at (1,0) {};
 
 \draw[edge_red] (a1) -- (b1);
 \draw[edge_blue] (c1) -- (d1);
 \draw[edge_green] (b1) -- (c1);
 \draw[edge_yellow] (a1) -- (d1);
  
 \begin{scope}[shift={(3,0)}]
 \node[vertex] (a1) at (0,0)  {};
 \node[vertex] (b1) at (0,1) {};
 \node[vertex] (c1) at (1,1) {};
 \node[vertex] (d1) at (1,0) {};
 
 \draw[edge_red] (a1) -- (b1);
 \draw[edge_blue] (c1) -- (d1);
 \draw[edge_green] (b1) -- (c1);
 \draw[edge_blue] (d1) -- (a1);
 \end{scope}
\end{tikzpicture}
\caption{An example for an edge-color-avoiding connected graph (left) --- after the removal of edges of any single color, there remains a Hamiltonian path ---, and an example for a not edge-color-avoiding connected graph (right) --- after the removal of the blue (denoted by squares) edges, the bottom right vertex becomes isolated.}
\label{fig:edgeexample}
\end{figure}

The following lemma directly follows from the definition.

\begin{lemma} \label{lem:edge}
 Let $G$ be an edge-colored graph in which every edge has a different color. Then $G$ is edge-color-avoiding connected if and only if it is $2$-edge-connected.
\end{lemma}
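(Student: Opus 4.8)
The plan is to unfold both definitions and observe that, under the hypothesis that all edges receive distinct colors, they describe exactly the same graphs. The crucial observation is that if every edge of $G$ has a different color, then each color class consists of a single edge; consequently, ``removing the edges of a single color'' is literally the same operation as ``removing a single edge.'' Thus $G$ is edge-color-avoiding connected if and only if $G - e$ is connected for every edge $e$ of $G$.

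Next I would match this reformulation to $2$-edge-connectivity. By the definition recalled in the excerpt, $G$ is $2$-edge-connected precisely when it stays connected after the removal of fewer than $2$ edges, i.e.\ after removing $0$ or $1$ edges; this is exactly the requirement that $G$ itself be connected \emph{and} that $G - e$ be connected for every edge $e$. So the two properties differ only in that $2$-edge-connectivity additionally demands that $G$ be connected. To close this apparent gap, I would note that for any edge $e$, if $G - e$ is connected then $G$ is connected as well, since adding an edge back to a connected graph cannot disconnect it. Hence, as soon as $G$ has at least one edge (equivalently, at least one color), the condition ``$G - e$ is connected for all $e$'' already forces $G$ to be connected, and the forward and backward implications both follow at once.

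I do not expect any real obstacle here: the statement is essentially a matter of careful definition-chasing, exactly as the authors signal by saying it follows directly from the definition. The only point that warrants a moment's care is reconciling the connectivity of $G$ itself, which $2$-edge-connectivity requires explicitly but edge-color-avoiding connectivity only guarantees implicitly once at least one edge is present; the degenerate edgeless case can be set aside as trivial.
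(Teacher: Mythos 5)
Your proof is correct and matches the paper's intent exactly: the paper offers no written proof, stating only that the lemma ``directly follows from the definition,'' and your definition-chasing (each color class is a single edge, so color removal coincides with single-edge removal) is precisely that argument spelled out. The one point you flag --- that $2$-edge-connectivity explicitly requires connectivity of $G$ while the color-avoiding condition yields it only once an edge is present --- is handled adequately by your observation that $G-e$ connected implies $G$ connected.
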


Clearly, an edge-colored graph is edge-color-avoiding connected if and only if after the removal of the edges of any single color, there exists a spanning tree in the remaining graph. This motivates the introduction of the following definition for matroids, which we call, after DeVos et al.~\cite{seymour}, courteously colored matroids.

\begin{definition} \label{def:courteous_matroid}
 Let $\mathcal{M}$ be a matroid, whose ground set is colored. We say that $\mathcal{M}$ is a \emph{courteously colored matroid} if after the deletion of the elements of any single color from $\mathcal{M}$, the rank of the matroid does not change.
\end{definition}

Thus a matroid is courteously colored if and only if after the deletion of the elements of any single color from the ground set, at least one basis remains intact. In particular, a graphic matroid is courteously colored if and only if each component of the corresponding graph is edge-color-avoiding connected. Another simple example is the case of \emph{uniform matroids}. The ground set of a uniform matroid $U_{n,k}$ is of size $n$, and its independent sets are those subsets of the ground set whose cardinality is at most $k$ for some integer $0 \le k \le n$. We note that $U_{n,k}$ is graphic if and only if $k \in \{ 0, 1, n-1, n \}$, see~\cite{book:recski} for further details. It is not difficult to see that the uniform matroid $U_{n,k}$ is courteously colored if and only if there exists no color which is assigned to at least $n-k+1$ elements: since the bases of $U_{n,k}$ are exactly the subsets of size $k$, after the deletion of the elements of any single color at least one basis remains intact if and only if at least $k$ elements have different colors from the deleted one.

Now we present two definitions of color-avoiding connectivity for vertex-colored graphs describing slightly different phenomena.

\begin{definition}
 We say that two vertices $u$ and $v$ of a vertex-colored graph $G$ are \emph{vertex-$c$-avoiding connected} for some color $c$ if there exists a $u$-$v$ path, and either at least one of $u$ and $v$ is of color $c$, or at least one $u$-$v$ path does not contain a vertex of color~$c$. If any two vertices of $G$ are vertex-$c$-avoiding connected for any color $c$, then we say that $G$ is \emph{vertex-color-avoiding connected}.
\end{definition}

\begin{definition}
 We say that two vertices $u$ and $v$ of a vertex-colored graph $G$ are \emph{internally vertex-$c$-avoiding connected} for some color $c$ if there exists a $u$-$v$ path containing no internal vertices of color $c$. If any two vertices of $G$ are internally vertex-$c$-avoiding connected for any color $c$, then we say that $G$ is \emph{internally vertex-color-avoiding connected}.
\end{definition}

Note that if a graph is internally vertex-color-avoiding connected, then it is vertex-color-avoiding connected as well, but not every vertex-color-avoiding connected graph is internally vertex-color-avoiding connected. For some small examples on the definitions of vertex- and internally vertex-color-avoiding connectivity, see Figure~\ref{fig:vertexexample}. 

\begin{figure}[H]
\centering
\begin{tikzpicture}[scale=1.5]
 \tikzstyle{vertex_red}=[draw, shape=regular polygon, regular polygon sides=3, fill=red, minimum size=12pt, inner sep=0]
 \tikzstyle{vertex_blue}=[draw, shape=rectangle, fill=blue, minimum size=8pt, inner sep=0]
 \tikzstyle{vertex_green}=[draw, shape=diamond, fill=green, minimum size=11pt, inner sep=0]
 \tikzstyle{vertex_yellow}=[draw, shape=star, fill=yellow, minimum size=11pt, inner sep=0]
 
 \node[vertex_red] (a1) at (0,0) {};
 \node[vertex_blue] (a2) at (0,1) {};
 \node[vertex_yellow] (a3) at (1,0) {};
 \node[vertex_green] (a4) at (1,1) {};

 \draw[very thick] (a1) -- (a2) -- (a4) -- (a3) -- (a1);
 
 \begin{scope}[shift={(2.25,0)}]
 \node[vertex_red] (a5) at (0,0) {};
 \node[vertex_blue] (a6) at (0,1) {};
 \node[vertex_blue] (a7) at (1,0) {};
 \node[vertex_blue] (a8) at (1,1) {};

 \draw[very thick] (a5) -- (a6) -- (a8) -- (a7) -- (a5);
 \end{scope}
 
 \begin{scope}[shift={(4.5,0)}]
 \node[vertex_red] (a9) at (0,0) {};
 \node[vertex_blue] (a10) at (0,1) {};
 \node[vertex_blue] (a11) at (1,0) {};
 \node[vertex_green] (a12) at (1,1) {};

 \draw[very thick] (a9) -- (a10) -- (a12) -- (a11) -- (a9);
 \end{scope}
\end{tikzpicture}
\caption{An example for a vertex- and internally vertex-color-avoiding connected graph (left) --- after the removal of vertices of any single color, there remains a Hamiltonian path, thus for any two vertices, there exists a path containing no internal vertices of the removed color ---, and an example for a vertex- but not internally vertex-color-avoiding connected graph (middle) --- there exists no path between the bottom left and the top right vertices which avoids internal vertices of color blue (denoted by squares), thus these two vertices are not internally vertex-blue-avoiding connected, however, they are vertex-blue-avoiding connected since the color of the top right vertex is blue. Finally, an example of a graph which is neither vertex- nor internally vertex-color-avoiding connected (right) --- the red (denoted by triangle) and green (denoted by rhombus) vertices are neither vertex- nor internally vertex-blue-avoiding connected.}
\label{fig:vertexexample}
\end{figure}
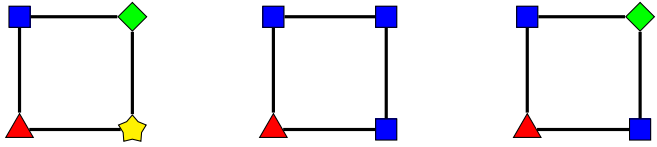 

Although vertex-color-avoiding connectivity does not imply internally vertex-color-avoiding connectivity in general, the following lemma shows that this implication holds for those graphs in which every vertex has a different color. The proof of this lemma directly follows from the definitions.

\begin{lemma} \label{lem:vertex}
 Let $G$ be a vertex-colored graph in which every vertex has a different color. Then the following are equivalent.
 
 \begin{enumerate}
  \item The graph $G$ is vertex-color-avoiding connected.
  \item The graph $G$ is internally vertex-color-avoiding connected.
  \item The graph $G$ is 2-vertex-connected.
 \end{enumerate}
\end{lemma}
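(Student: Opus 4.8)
The plan is to establish the cycle of implications $(2) \Rightarrow (1) \Rightarrow (3) \Rightarrow (2)$. The implication $(2) \Rightarrow (1)$ is already recorded in the paragraph preceding the lemma --- internally vertex-color-avoiding connectivity always implies vertex-color-avoiding connectivity --- so it may be invoked for free. The one observation driving everything else is that, because every vertex of $G$ carries a distinct color, each color class consists of exactly one vertex (or is empty); consequently, deleting all vertices of a fixed color $c$ amounts to deleting at most one vertex, namely the unique vertex $w$ of color $c$ when it exists. This is precisely what lets us translate the color-avoiding conditions into statements about removing a single vertex, that is, into $2$-vertex-connectivity. (The definition of $2$-vertex-connectivity additionally demands more than two vertices; since a connected graph on one or two vertices is vacuously (internally) vertex-color-avoiding connected yet never $2$-vertex-connected, the equivalence is to be read for $|V(G)| \ge 3$, which I assume from here on.)

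For $(1) \Rightarrow (3)$: since vertex-$c$-avoiding connectivity of a pair already requires a $u$--$v$ path to exist, condition $(1)$ forces $G$ to be connected. Suppose, for contradiction, that $G$ has a cut vertex $w$, and let $c$ be its (unique) color. Then $G - w$ is disconnected, so I may pick $u$ and $v$ in different components of $G - w$; in particular $u, v \neq w$, and since colors are distinct, neither $u$ nor $v$ has color $c$. By $(1)$, the pair $u, v$ is vertex-$c$-avoiding connected, so there must be a $u$--$v$ path avoiding every vertex of color $c$, that is, avoiding $w$. But $w$ separates $u$ from $v$, so every $u$--$v$ path meets $w$ --- a contradiction. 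Hence $G$ is connected and has no cut vertex, which on at least three vertices is exactly $2$-vertex-connectivity.

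For $(3) \Rightarrow (2)$: assume $G$ is $2$-vertex-connected and fix vertices $u, v$ together with an arbitrary color $c$. If no vertex has color $c$, or if the unique vertex $w$ of color $c$ is $u$ or $v$, then any $u$--$v$ path (one exists as $G$ is connected) has no internal vertex of color $c$, because $w$, when present, is an endpoint rather than an internal vertex. Otherwise $w \notin \{u, v\}$, and $2$-vertex-connectivity guarantees that $G - w$ is connected; any $u$--$v$ path in $G - w$ contains no vertex of color $c$ at all, hence none internally. Thus every pair is internally vertex-$c$-avoiding connected for every $c$, which is exactly $(2)$.

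The main --- and essentially only --- obstacle is the bookkeeping of the degenerate small cases hidden in the ``more than $k$ vertices'' clause of the connectivity definition; once $|V(G)| \ge 3$ is assumed, each step is a direct unwinding of the definitions, with the distinct-coloring hypothesis entering crucially in $(1) \Rightarrow (3)$ to ensure that the two separated vertices do not themselves carry the avoided color $c$.
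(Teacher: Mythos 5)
Your proof is correct and is exactly the direct unwinding of the definitions that the paper has in mind --- the paper omits the argument entirely, stating only that it ``directly follows from the definitions,'' and your cycle $(2) \Rightarrow (1) \Rightarrow (3) \Rightarrow (2)$, hinging on the observation that each color class is a single vertex, is the natural way to carry that out. Your caveat about graphs on at most two vertices (which are trivially color-avoiding connected but never $2$-vertex-connected under the paper's definition) is a genuine, if minor, point that the paper's statement glosses over and that does not affect its later use in the NP-hardness reduction.
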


For convenience, let us introduce the following notation.

\begin{notation} \label{not:comp}
 Given an edge- or a vertex-colored graph $G$ and a color $c$, we denote by $G_{\overline{c}}$ the graph which can be obtained from $G$ by removing the edges or the vertices of color $c$ from it.
 
 Given a matroid $\mathcal{M}$ whose ground set is colored and given a color $c$, we denote by $\mathcal{M}_{\overline{c}}$ the matroid which can be obtained from $\mathcal{M}$ by deleting the elements of color $c$ from it.
\end{notation}

\section{Main results}

 In this section, we study the problems of finding edge-, vertex- and internally vertex-color-avoiding connected spanning subgraphs with minimum number of edges in edge- or vertex-colored graphs, and also finding courteously colored rank-preserving restrictions to a set of minimum size in courteously colored matroids.

\subsection{Complexity}

 It is not difficult to see that an edge- or vertex-colored graph $G$ has a color-avoiding connected spanning subgraph if and only if $G$ is color-avoiding connected, and similarly, a colored matroid $\mathcal{M}$ has a courteously colored rank-preserving restriction if and only if $\mathcal{M}$ is courteously colored.

\begin{theorem} \label{thm:complexity}
 Given a matroid $\mathcal{M} = (S, \mathcal{I})$ whose ground set is colored and given a positive integer $m$, it is NP-complete to decide whether $\mathcal{M}$ has a courteously colored rank-preserving restriction to a subset $T \subseteq S$ of size at most $m$. Furthermore, this problem remains NP-complete even for graphic matroids.
\end{theorem}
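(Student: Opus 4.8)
The plan is to show membership in NP and then to prove NP-hardness already for graphic matroids; since graphic matroids are a special case of matroids, this establishes both assertions simultaneously.

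For membership in NP, take the subset $T \subseteq S$ itself as a certificate. We must check that $|T| \le m$, that the restriction $\mathcal{M}\big|_T$ is rank-preserving (i.e.\ $r(T) = r(S)$), and that it is courteously colored (i.e.\ for every color $c$, the matroid $(\mathcal{M}\big|_T)_{\overline{c}}$ has rank $r(S)$). There are at most $|S|$ distinct colors, and each rank can be computed with polynomially many independence-oracle calls, so the whole verification runs in polynomial time. Hence the problem lies in NP.

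For NP-hardness I would reduce from the Hamiltonian cycle problem. Given a connected graph $G = (V, E)$ on $n \colonequals |V|$ vertices, let $\mathcal{M}$ be its graphic matroid, assign to every edge a distinct color, and set $m \colonequals n$; this construction is clearly polynomial. A restriction $\mathcal{M}\big|_T$ corresponds to the spanning subgraph $(V, T)$, and it is rank-preserving precisely when $r(T) = n - 1$, that is, when $(V, T)$ is connected. Since all colors are distinct, Lemma~\ref{lem:edge} applies to $(V, T)$, so $\mathcal{M}\big|_T$ is courteously colored if and only if $(V, T)$ is $2$-edge-connected.

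It remains to verify the equivalence. A $2$-edge-connected spanning subgraph has minimum degree at least $2$, hence at least $n$ edges, with equality exactly when every vertex has degree $2$; and a connected graph in which every degree equals $2$ is a single spanning cycle, that is, a Hamiltonian cycle. Consequently $\mathcal{M}$ admits a courteously colored rank-preserving restriction to a set of size at most $n$ if and only if $G$ contains a Hamiltonian cycle. (In particular, if $G$ is Hamiltonian then it contains a spanning cycle, so $\mathcal{M}$ is itself courteously colored and the instance is meaningful.) The only points requiring care are the matroid--graph dictionary (rank-preserving corresponds to spanning and connected, and with distinct colors courteous coloring corresponds to being bridgeless) together with the degree-counting argument that forces a Hamiltonian cycle precisely at the threshold $m = n$; the coloring aspect itself is tamed entirely by Lemma~\ref{lem:edge}.
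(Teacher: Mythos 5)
Your proposal is correct and follows essentially the same route as the paper: reduce from Hamiltonian cycle by taking the graphic matroid of a connected graph with all edges distinctly colored, set $m = |V(G)|$, and invoke Lemma~\ref{lem:edge} to translate courteous coloring into $2$-edge-connectivity. You merely spell out the degree-counting step (a $2$-edge-connected spanning subgraph on at most $n$ edges must be a Hamiltonian cycle) that the paper leaves implicit.
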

\begin{proof}
 The problem is clearly in NP. Now we show that the problem is NP-hard even for graphic matroids. As we observed before Definition~\ref{def:courteous_matroid}, a graphic matroid is courteously colored if and only if each component of the corresponding graph is edge-color-avoiding connected. Lemma~\ref{lem:edge} implies that for those connected graphs in which every edge has a different color and for the choice of $m = \big| V(G) \big|$, our problem is equivalent to deciding whether the graph contains a Hamiltonian cycle, which is known to be NP-complete~\cite{hamilton}.
\end{proof}

\begin{corollary} \label{cor:ECA-subgraph_NP-complete}
 Given an edge-colored graph $G = (V, E)$ and a positive integer $m$, it is NP-complete to decide whether $G$ has an edge-color-avoiding connected spanning subgraph with at most $m$ edges.
\end{corollary}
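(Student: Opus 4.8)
The plan is to derive this corollary directly from Theorem~\ref{thm:complexity} by specializing it to graphic matroids and translating the matroid terminology into graph terminology. First I would verify that the problem lies in NP: given a candidate edge set $T \subseteq E$ with $|T| \le m$, one can check in polynomial time that $(V, T)$ is a spanning subgraph and, by deleting the edges of each color $c$ in turn and testing connectivity of the resulting graph, that $(V, T)$ is edge-color-avoiding connected. Since the number of colors is at most $|E|$, this amounts to a polynomial number of connectivity tests.

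For the hardness, the key is a dictionary between the two problems. Restricting the graphic matroid $\mathcal{M}$ of a connected graph $G$ to an edge subset $T$ yields precisely the graphic matroid of the spanning subgraph $(V, T)$. The restriction $\mathcal{M}\big|_T$ is rank-preserving exactly when $r(T) = r(E) = |V| - 1$, which for connected $G$ holds if and only if $(V, T)$ is connected; and recall that a graphic matroid is courteously colored if and only if every component of the corresponding graph is edge-color-avoiding connected. Combining these, for a connected edge-colored graph $G$ a courteously colored rank-preserving restriction of $\mathcal{M}$ to a set of size at most $m$ is the same object as an edge-color-avoiding connected spanning subgraph of $G$ with at most $m$ edges.

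It then remains to note that the NP-hardness established in the proof of Theorem~\ref{thm:complexity} already uses graphic matroids arising from connected graphs --- namely connected graphs with pairwise distinct edge colors together with the bound $m = \big| V(G) \big|$, for which the problem is equivalent to deciding the existence of a Hamiltonian cycle. Under the dictionary above, this is exactly the statement that deciding whether such a $G$ admits an edge-color-avoiding connected spanning subgraph with at most $\big| V(G) \big|$ edges is NP-hard, which yields the corollary. This is not so much an obstacle as a routine specialization; the only point requiring a little care is the connectivity bookkeeping: one must use that the instances produced in Theorem~\ref{thm:complexity} are connected, so that ``rank-preserving restriction'' and ``connected spanning subgraph'' coincide and the componentwise courteousness criterion collapses to edge-color-avoiding connectivity of the whole subgraph.
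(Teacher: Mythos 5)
Your proposal is correct and matches the paper's intent: the corollary is stated as an immediate consequence of Theorem~\ref{thm:complexity}, whose hardness proof already works with graphic matroids of connected graphs, and your dictionary between rank-preserving courteously colored restrictions and edge-color-avoiding connected spanning subgraphs is exactly the implicit translation the paper relies on. No issues.
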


Now we prove an analogue of Corollary~\ref{cor:ECA-subgraph_NP-complete} for the concepts of vertex- and internally vertex-color-avoiding connectivity.

\begin{theorem}\label{thm:vertexNP}
 Given a vertex-colored graph $G$ and a positive integer $m$, it is NP-complete to decide whether $G$ has a vertex-color-avoiding connected spanning subgraph with at most $m$ edges. Similarly, it is also NP-complete to decide whether $G$ has an internally vertex-color-avoiding connected spanning subgraph with at most $m$ edges.
\end{theorem}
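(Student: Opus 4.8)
The plan is to mirror the reduction used in the proof of Theorem~\ref{thm:complexity}, replacing the role of Lemma~\ref{lem:edge} with Lemma~\ref{lem:vertex}. Membership in NP is immediate for both problems: given a candidate spanning subgraph, one can verify in polynomial time both that it has at most $m$ edges and that it is vertex-color-avoiding connected (respectively, internally vertex-color-avoiding connected) by deleting the vertices of each color in turn and checking the relevant path conditions. So it suffices to prove NP-hardness.

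For the hardness direction, I would reduce from the Hamiltonian cycle problem, which is NP-complete even on graphs with at least three vertices. Given such a graph $G$ on $n$ vertices, I would assign a distinct color to every vertex and set $m = n$. Since this is a rainbow coloring and every spanning subgraph of $G$ inherits the property (a spanning subgraph has the same vertex set), Lemma~\ref{lem:vertex} lets me replace both ``vertex-color-avoiding connected'' and ``internally vertex-color-avoiding connected'' by ``$2$-vertex-connected'' throughout. Thus for this instance both problems reduce to the question of whether $G$ has a $2$-vertex-connected spanning subgraph with at most $n$ edges.

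The key combinatorial observation is that a $2$-vertex-connected spanning subgraph $H$ with at most $n$ edges must be a Hamiltonian cycle of $G$. Indeed, $2$-vertex-connectivity forces minimum degree at least $2$, so $H$ has at least $n$ edges; combined with $|E(H)| \le n$ this yields $|E(H)| = n$ and that every vertex has degree exactly $2$, whence the connected graph $H$ is a single spanning cycle. Conversely, any Hamiltonian cycle of $G$ is a spanning subgraph with exactly $n$ edges that is $2$-vertex-connected (a cycle on at least three vertices is $2$-vertex-connected), and hence both vertex- and internally vertex-color-avoiding connected by Lemma~\ref{lem:vertex}. This establishes equivalence with the existence of a Hamiltonian cycle and handles both statements simultaneously.

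I do not expect a genuine obstacle here, since the two lemmas carry the conceptual load; the only points requiring care are checking that the rainbow property is inherited by the spanning subgraph (so that Lemma~\ref{lem:vertex} applies to the candidate subgraph $H$ and not merely to $G$) and restricting to instances with at least three vertices, so that $2$-vertex-connectivity is well defined under the convention adopted in the excerpt.
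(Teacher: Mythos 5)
Your proposal is correct and follows essentially the same route as the paper: rainbow-color the vertices, set $m = |V(G)|$, and invoke Lemma~\ref{lem:vertex} to reduce both problems to finding a Hamiltonian cycle. You additionally spell out the degree-counting argument showing that a $2$-vertex-connected spanning subgraph with at most $n$ edges must be a Hamiltonian cycle, a detail the paper leaves implicit.
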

\begin{proof}
 Both problems are clearly in NP. Now we show that they are NP-hard. If every vertex has a different color, then by Lemma~\ref{lem:vertex}, for the choice of $m = \big| V(G) \big|$, both problems are equivalent to deciding whether the graph contains a Hamiltonian cycle, which is known to be NP-complete~\cite{hamilton} even for subcubic graphs~\cite{hamilton-subcubic}.
\end{proof}

\subsection{Approximation algorithm for courteously colored matroids}

As is clear from the proofs of Theorems~\ref{thm:complexity} and~\ref{thm:vertexNP}, in the case of those connected graphs whose edges or vertices are all of different colors, we want to find a 2-edge- or a 2-vertex-connected spanning subgraph with minimum number of edges, respectively, which are NP-hard problems. However, there exist polynomial-time approximation algorithms for these problems. Khuller and Vishkin~\cite{KhullerVishkin} provided a $3/2$-approximation algorithm for 2-edge-connectivity and a $5/3$-approximation algorithm for 2-vertex-connectivity: they modified the depth-first search algorithm so that it does not just find a spanning tree, but a minimally 2-edge- or 2-vertex-connected\footnote{A graph is called minimally $k$-edge- or $k$-vertex-connected if it is $k$-edge- or $k$-vertex-connected but after the removal of any of its edges, the obtained graph is not $k$-edge- or $k$-vertex-connected, respectively.} spanning subgraph. Gabow et al.~\cite{gabow} presented a $\big( 1+\frac{2}{k} \big)$-approximation algorithm for finding a $k$-edge-connected spanning subgraph with minimum number of edges with the use of linear programming. Currently, the best known approximation factor for finding a 2-edge-connected spanning subgraph with minimum number of edges is~$4/3$ by Hunkenschröder et al.~\cite{vempala}, and for finding 2-vertex-connected spanning subgraphs with minimum number of edges Cheriyan and Thurimella~\cite{cheriyan} gave a $3/2$-approximation algorithm, but there exist better performing algorithms if the input graph satisfies some additional conditions --- for example, see~\cites{krysta, narayan}.

In this section, we present a polynomial-time approximation algorithm for finding a courteously colored rank-preserving restriction of a matroid to a set of minimum size. But first, we give a lower bound on the number of elements of a courteously colored matroid. Later, this lower bound is used for determining the approximation factor of our algorithm.

\begin{theorem} \label{thm:edge}
 Let $\mathcal{M} = (S, \mathcal{I})$ be a courteously colored matroid colored with exactly $k \in \mathbb{Z}_+$ colors, and let $r=r(S)$. If $r=0$, then $|S| \ge 0$, and if $r \ge 1$, then $k \ge 2$ and $|S| \ge {\footnotesize \Big\lceil} \frac{k \cdot r}{k-1} {\footnotesize \Big\rceil}$, and these lower bounds are tight. 
\end{theorem}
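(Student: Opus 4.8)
The plan is to turn the courteous-coloring hypothesis into a clean counting inequality. Write $C_i \subseteq S$ for the set of elements of color $i$ and $n_i = |C_i|$, so that $|S| = \sum_{i=1}^k n_i$. The case $r = 0$ is vacuous, so assume $r \ge 1$. First I would dispose of the claim $k \ge 2$: if there were only one color, then deleting it would leave the empty ground set, which has rank $0 \ne r$, contradicting Definition~\ref{def:courteous_matroid}. The key translation comes from the remark following that definition: courteous coloring means precisely that for every color $i$ the restriction $\mathcal{M}_{\overline{i}} = \mathcal{M}\big|_{S - C_i}$ retains full rank, i.e.\ $r(S - C_i) = r$. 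Since the rank of any set is at most its cardinality, this gives $|S - C_i| = \sum_{j \ne i} n_j \ge r$ for every $i$, equivalently $n_i \le |S| - r$.

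For the lower bound I would simply sum these $k$ inequalities. Each $n_j$ is omitted from exactly one of the sums $\sum_{j \ne i} n_j$, so $\sum_{i=1}^k \sum_{j \ne i} n_j = (k-1)|S|$, and adding the inequalities yields $(k-1)|S| \ge kr$. Rearranging gives $|S| \ge \frac{kr}{k-1}$, and since $|S|$ is an integer I may round up to $\big\lceil \frac{kr}{k-1} \big\rceil$. This step is just an averaging argument and should go through with no difficulty.

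The substantive part is tightness, which I would handle with uniform matroids via the criterion established before Definition~\ref{def:courteous_matroid}: a $k$-coloring of $U_{N,r}$ is courteous if and only if no color class has size at least $N - r + 1$. Setting $N = \big\lceil \frac{kr}{k-1} \big\rceil$, a short computation gives $N - r = \big\lceil \frac{r}{k-1} \big\rceil$. The task then reduces to $k$-coloring the $N$ elements of $U_{N,r}$ so that every color class is nonempty and has size at most $N - r$; such a coloring uses exactly $k$ colors, is courteous, and achieves $|S| = N$. The cleanest witness is the case $(k-1) \mid r$: then $N = \frac{kr}{k-1}$ is an integer and the balanced coloring assigning each color to $\frac{r}{k-1} = N - r$ elements works, producing an infinite family attaining equality for every $k$.

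The main point to watch, and the only genuine obstacle, is the feasibility of this coloring in general. Distributing $N$ elements into $k$ parts of size in $[1, N-r]$ requires $k \le N \le k(N-r)$; the upper bound always holds since $r \le (k-1)\big\lceil \frac{r}{k-1}\big\rceil$, whereas the lower bound $N \ge k$ holds precisely when $r \ge k-1$. Thus equality in the formula is attainable exactly in this regime (for smaller $r$ the binding constraint becomes $|S| \ge k$ from the requirement that all $k$ colors appear), and I would phrase the tightness statement through the uniform-matroid family accordingly.
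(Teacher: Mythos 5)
Your proof is correct. The lower bound argument is the same double-counting the paper uses: for each color $c$ the set $S - C_c$ must contain a basis of $\mathcal{M}_{\overline{c}}$, hence has at least $r$ elements, and summing over the $k$ colors counts each element exactly $k-1$ times, giving $(k-1)|S| \ge kr$. Where you genuinely diverge is the tightness construction: the paper builds a colored \emph{graphic} matroid, namely an edge-color-avoiding connected graph on a path $v_0, \ldots, v_r$ whose consecutive edges cycle through colors $0, \ldots, k-2$ together with length-$(k-1)$ ``skip'' edges of color $k-1$, whereas you use a balanced coloring of the uniform matroid $U_{N,r}$ with $N = \big\lceil \frac{kr}{k-1} \big\rceil$. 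Your witness is easier to verify (the courteousness criterion for $U_{N,r}$ reduces to a size bound on color classes), but it buys less: since $U_{n,r}$ is graphic only for $r \in \{0,1,n-1,n\}$, your construction does not show tightness within graphic matroids, and the paper reuses its graph construction later as the optimal subgraph in the example showing Algorithm~\ref{alg:matr} attains ratio exactly $\frac{2(k-1)}{k}$. On the other hand, your explicit feasibility analysis is a point in your favor: you correctly observe that equality is attainable only when $r \ge k-1$ (otherwise $\big\lceil \frac{kr}{k-1} \big\rceil = r+1 < k$, which cannot support $k$ distinct colors), a restriction the paper's construction also imposes (it stipulates $r \ge k-1$) but does not flag in the statement of the theorem.
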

\begin{proof}
 If $r=0$, then every matroid is courteously colored including the one with the empty ground set; thus, the lower bound is tight in this case.
 
 Suppose to the contrary that there exists a courteously colored matroid $\mathcal{M}$ colored with $k=1$ color~$c$ and with rank $r \ge 1$. Then by the definition of courteous colorings, $\mathcal{M}_{\overline{c}}$ has rank $r$. On the other hand, the ground set of $\mathcal{M}_{\overline{c}}$ is the empty set, which is a contradiction.
 
 \medskip
 
 Now consider the case $k \ge 2$. Let $\mathcal{M}$ be a courteously colored matroid with rank $r$, where the elements of the ground set are colored with exactly $k$ colors. Then for any of these $k$ colors $c$, the matroid $\mathcal{M}_{\overline{c}}$ has rank $r$, thus its ground set has at least $r$ elements. That sums up to at least $k\cdot r$ elements, where every element is counted exactly $k-1$ times, so the number of elements is at least ${\footnotesize \Big\lceil} \frac{k\cdot r}{k-1} {\footnotesize \Big\rceil}$.
 
 To show that these lower bounds are tight, we construct courteously colored graphic matroids of rank $r$ on ${\footnotesize \Big\lceil} \frac{k \cdot r}{k-1} {\footnotesize \Big\rceil}$ elements which are colored with exactly $k$ colors for any $k \ge 2$. More precisely, we construct an edge-color-avoiding connected graph $G=(V,E)$ on $r+1$ vertices and with $r + {\footnotesize \Big\lceil} \frac{r}{k-1} {\footnotesize \Big\rceil} = {\footnotesize \Big\lceil} \frac{k\cdot r}{k-1} {\footnotesize \Big\rceil}$ edges, where the edges are colored with exactly $k$ colors. Let
 \[ C \colonequals \{ 0, 1, \ldots, k-1 \} \]
 be the color set, let
 \[ V \colonequals \{ v_0, \ldots, v_r\} \]
 with $r \ge k-1$, and let
 \[ E_i \colonequals \big\{ v_j v_{j+1} \bigm| j \in \{ 0, 1, \ldots, r-1 \} \text{ and } j \equiv i \!\!\! \pmod{k-1} \big\} \]
 be the set of edges of color $i$ for any $i \in \{ 0, 1, \ldots, k-2 \}$, and let
 \[ E_{k-1} \colonequals \big\{ v_j v_{\max(j+k-1,r)} \bigm| j \in \{ 0, \ldots, r-1 \} \text{ and } j \equiv 0 \!\!\! \pmod{k-1} \big\} \]
 be the set of edges of color $k-1$, and let $E = \bigcup_{i \in C} E_i$. Note that $G$ might have a pair of parallel edges between the vertices $v_{r-1}$ and $v_r$; for an example see Figure~\ref{fig:edge-proof-gencon}.
 
 It is not difficult to show that $G$ is edge-color-avoiding connected and has $r + {\footnotesize \Big\lceil} \frac{r}{k-1} {\footnotesize \Big\rceil} = {\footnotesize \Big\lceil} \frac{k\cdot r}{k-1} {\footnotesize \Big\rceil}$ edges.
\end{proof}

\begin{figure}[H]
\centering
\begin{tikzpicture}[scale=2.2]
 \tikzstyle{vertex}=[draw,circle,fill,minimum size=8,inner sep=0]
 \tikzset{paint/.style={draw=#1!50!black, fill=#1!50}, decorate with/.style = {decorate, decoration={shape backgrounds, shape=#1, shape size = 4.5pt, shape sep = 6pt}}}
 \tikzstyle{edge_red}=[draw, decorate with = isosceles triangle, paint = red]
 \tikzstyle{edge_blue}=[draw, decorate with = rectangle, decoration = {shape size = 4pt, shape sep = 6.5pt}, paint = blue]
 \tikzstyle{edge_green}=[draw, decorate with = diamond, paint = green]
 \tikzstyle{edge_yellow}=[draw, decorate with = star, paint = yellow]
 
 \node[vertex] (a1) at (1,0) [label={below:$v_0$}] {};
 \node[vertex] (a2) at (2,0) [label={below:$v_1$}] {};
 \node[vertex] (a3) at (3,0) [label={below:$v_2$}] {};
 \node[vertex] (a4) at (4,0) [label={below:$v_3$}] {};
 \node[vertex] (a5) at (5,0) [label={below:$v_4$}] {};
 \node[vertex] (a6) at (6,0) [label={below:$v_5$}] {};
 \node[vertex] (a7) at (7,0) [label={below:$v_6$}] {};
 \node[vertex] (a8) at (8,0) [label={below:$v_7$}] {};
 
 \draw[edge_red] (a1) -- (a2);
 \draw[edge_red] (a4) -- (a5);
 \draw[edge_red] (a7) -- (a8);
 
 \draw[edge_blue] (a2) -- (a3);
 \draw[edge_blue] (a5) -- (a6);
 
 \draw[edge_green] (a3) -- (a4);
 \draw[edge_green] (a6) -- (a7);
 
 \draw[edge_yellow, bend left=35] (a1) to (a4);
 \draw[edge_yellow, bend left=35] (a4) to (a7);
 \draw[edge_yellow, bend left=60] (a7) to (a8);
\end{tikzpicture}
\caption{An edge-color-avoiding connected graph on $8$ vertices and with minimum number of edges colored with exactly $4$ colors.}
\label{fig:edge-proof-gencon}
\end{figure}

Now we present a polynomial-time approximation algorithm for finding a courteously colored rank-preserving restriction of a matroid to a set of minimum size. To shorten the description of the algorithm, let us define the following subroutine, Subroutine IncreaseRank, which greedily adds elements to a given subset $T \subseteq S$ in a matroid $\mathcal{M} = (S,I)$ so that each newly added element increases the rank of the current subset. In particular, if $\mathcal{M}$ is a graphic matroid associated to a graph $G$ and $T \subseteq E(G)$, then this subroutine adds exactly one edge of $G$ between any two components of the graph $G' = \big( V(G), T \big)$ if such an edge exists. Let us note that in the general case, the rank of a set $T \subseteq S$ can be computed using $O \big( |S| \big)$ independence oracle calls with the \emph{greedy algorithm}\footnote{The greedy algorithm in general is used for finding a maximum-weight basis in a matroid: first, we sort the elements of the ground set by weights, and starting from the empty set, in each step we extend the already existing independent set with an element having maximum possible weight.}~\cite{book:recski}.

\begin{algorithm}[H]
\caption*{\textbf{Subroutine }IncreaseRank} \label{alg:subr}
\begin{algorithmic}[1]
 \Require A matroid $\mathcal{M}=(S,\mathcal{I})$ and a subset $T \subseteq S$.
 \Ensure A set $T' \subseteq S$ for which $r(T') = r(S)$ and $T \subseteq T'$.
 \State $T' \gets T$
 \For{$s\in S$}
     \If{$r(T') < r \big( T'\cup\{s\} \big)$}
         \State $T' \gets T'\cup \{s\}$
     \EndIf
 \EndFor
 \State \Return $T'$
\end{algorithmic}
\end{algorithm}

Now we are ready to present Algorithm~\ref{alg:matr}, which does the following. Given a courteously colored matroid $\mathcal{M} = (S, \mathcal{I})$ with $S \ne \emptyset$, colored with a color set $C$, first we pick an arbitrary basis $T$ of $\mathcal{M}$. Then, for any color $c \in C$, if the removal of the elements of color $c$ decreases the rank of the set of the so far selected elements, then using Subroutine IncreaseRank, we select some additional elements to avoid the mentioned decrease in the rank. At the end, the selected elements clearly form a courteously colored rank-preserving restriction.

\begin{algorithm}[H]
\caption{Finding courteously colored rank-preserving restrictions} \label{alg:matr}
\begin{algorithmic}[1]
 \Require A courteously colored matroid $\mathcal{M} = (S, \mathcal{I})$ with $S \ne \emptyset$, colored with a color set $C$.
 \Ensure A courteously colored rank-preserving restriction of $\mathcal{M}$.
 \State $T \gets \text{a basis of $\mathcal{M}$}$ \hfill \texttt{\textcolor{black!50}{//~Phase~1}}
 \For{$c\in C$} \hfill \texttt{\textcolor{black!50}{//~Phase~2}}
     \State $T_{\overline{c}} \gets \{ s \in T \mid \text{$s$ is not of color $c$} \}$
     \If{$r(T_{\overline{c}}) < r(S)$}
         \State $T \gets T \cup \text{IncreaseRank}(\mathcal{M}_{\overline{c}},T_{\overline{c}})$
     \EndIf
 \EndFor
 \State \Return $\mathcal{M}\big|_T$
\end{algorithmic}
\end{algorithm}

In the following theorem, we analyze Algorithm~\ref{alg:matr}.

\begin{theorem}\label{thm:edgeappr}
 Algorithm~\ref{alg:matr} is a polynomial-time $\frac{2(k-1)}{k}$-approximation algorithm for finding a courteously colored rank-preserving restriction of a courteously colored matroid --- given by an independence oracle --- whose elements are colored with exactly $k \in \mathbb{Z}_+$ colors to a set of minimum size.
\end{theorem}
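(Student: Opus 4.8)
The plan is to verify correctness and polynomial running time first, and then to bound the size of the output against the lower bound of Theorem~\ref{thm:edge}. Write $r = r(S)$ and let $B$ be the basis chosen in Phase~1, so $|B| = r$. For correctness I would argue that the returned restriction $\mathcal{M}\big|_T$ is courteously colored and rank-preserving: for each color $c$, once Phase~2 has processed $c$ we have $r(T_{\overline{c}}) = r(S)$, and since $T$ only grows afterwards — additions of color $c$ leave $T_{\overline{c}}$ unchanged, while additions of other colors can only enlarge it — this equality persists to the end. Hence $r(T) = r(S)$ and $r\big((\mathcal{M}\big|_T)_{\overline{c}}\big) = r(T_{\overline{c}}) = r(T)$ for every $c$, which is exactly the courteous condition. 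For the running time, each rank evaluation and each call of Subroutine IncreaseRank costs $O(|S|)$ independence-oracle calls via the greedy algorithm, and Phase~2 performs one such round per color; since there are $k \le |S|$ colors, the whole algorithm uses $O(k|S|)$ oracle calls plus polynomially many elementary steps.

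The crux is the upper bound $|T| \le 2r$. I would prove this through the per-color estimate that, when Phase~2 processes color $c$, it adds at most $b_c$ new elements, where $b_c$ denotes the number of color-$c$ elements of $B$. The point is that at the moment $c$ is processed the current set $T^{(c)}$ contains $B$, so $T^{(c)}_{\overline{c}} \supseteq B_{\overline{c}}$ and therefore $r(T^{(c)}_{\overline{c}}) \ge r(B_{\overline{c}}) = r - b_c$, the last equality holding because $B$ is independent. Since IncreaseRank adds exactly $r - r(T^{(c)}_{\overline{c}})$ elements (each raising the rank by one, up to the rank of $\mathcal{M}_{\overline{c}}$, which equals $r$ by courteousness), the number added for color $c$ is at most $b_c$. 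Summing over all colors and using $\sum_{c} b_c = |B| = r$ shows that Phase~2 adds at most $r$ elements in total, so $|T| \le r + r = 2r$.

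Finally, I would combine this with Theorem~\ref{thm:edge}; assume $r \ge 1$, the case $r = 0$ being trivial since then $\mathrm{OPT} = |T| = 0$. The optimum restriction is itself a courteously colored matroid of rank $r$ using some number $k'$ of colors with $2 \le k' \le k$ (at least two by Theorem~\ref{thm:edge} since $r \ge 1$), so $\mathrm{OPT} \ge \big\lceil \tfrac{k' r}{k'-1}\big\rceil \ge \tfrac{k r}{k-1}$, the last step because $\tfrac{k'}{k'-1}$ is decreasing in $k'$. Hence
\[ \frac{2(k-1)}{k}\cdot \mathrm{OPT} \;\ge\; \frac{2(k-1)}{k}\cdot\frac{kr}{k-1} \;=\; 2r \;\ge\; |T|, \]
which is the claimed approximation guarantee.

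The step I expect to be the main obstacle is pinning down the per-color bound (at most $b_c$ new elements are added when color $c$ is processed) cleanly: one must be careful that the set processed for $c$ already contains the whole Phase~1 basis (so the rank after deletion of color $c$ has not dropped below $r - b_c$), that IncreaseRank operates inside $\mathcal{M}_{\overline{c}}$ and hence never re-adds elements already present, and that $\mathcal{M}_{\overline{c}}$ really has full rank $r$ so that IncreaseRank succeeds — all of which rely on monotonicity of the rank function and on courteousness of $\mathcal{M}$. A second delicate point is justifying $\mathrm{OPT} \ge \tfrac{kr}{k-1}$ uniformly over the a priori unknown color count $k'$ of the optimal restriction.
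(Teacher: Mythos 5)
Your proposal is correct and follows essentially the same route as the paper: Phase~1 contributes the $r$ basis elements, the per-color bound (your $b_c$ is the paper's $y_c$) shows Phase~2 adds at most $\sum_c b_c = r$ further elements, and the resulting bound $|T| \le 2r$ is compared against the lower bound $\lceil kr/(k-1)\rceil$ from Theorem~\ref{thm:edge}. Your explicit treatment of the persistence of $r(T_{\overline{c}})=r(S)$ under later additions and of the possibility that the optimum uses only $k' \le k$ colors are points the paper passes over more quickly, but they are refinements of the same argument rather than a different approach.
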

\begin{proof}
 We discuss the time complexity and the correctness of the algorithm separately.
 
 \medskip
 
 \textit{Correctness.} Let $\mathcal{M} = (S, \mathcal{I})$ be the input matroid whose ground set is courteously colored with exactly $k \in \mathbb{Z}_+$ colors and let $r \colonequals r(S)$. If $k=1$, then by Theorem~\ref{thm:edge}, $r=0$ must hold, i.e., the rank of the input matroid $\mathcal{M}$ must be zero. Then for any color $c$, the rank of $\mathcal{M}_{\overline{c}}$ is also zero and the only basis of $\mathcal{M}$ is the empty set. Thus the algorithm selects the empty set in Phase~1 and does not add any elements to it in Phase~2, so the output is $T = \emptyset$. This is clearly the optimal solution.
 
 \medskip
 
 Now assume $k \ge 2$. In Phase~1, we simply select a basis of $\mathcal{M}$. Note that this step already guarantees the rank-preserving property of the output.
 
 Now we show that in Phase~2, the algorithm selects some additional elements of the ground set to ensure that the output is courteously colored. Since $\mathcal{M}$ is courteously colored, $\mathcal{M}_{\overline{c}}$ has the same rank as~$\mathcal{M}$, namely $r$, thus we can select some additional elements from the ground set of $\mathcal{M}_{\overline{c}}$ so that the obtained set of selected elements has rank $r$ as well. Therefore, at the end of Phase~2, the restriction of $\mathcal{M}$ to the so far selected elements is indeed courteously colored.
 
 
 Note that the elements of the output are not necessarily colored with exactly $k$ colors.
 
 \medskip
 
 \textit{Approximation factor.} Now we prove that the ground set of the output contains at most $\frac{2(k-1)}{k}\cdot \frac{k \cdot r}{k-1} = 2r$ elements, which is, by Theorem~\ref{thm:edge}, at most $\frac{2(k-1)}{k}$ times as many as the minimum number of elements of a courteously colored matroid whose elements are colored with at most $k$ colors, implying that Algorithm~\ref{alg:matr} is a $\frac{2(k-1)}{k}$-approximation algorithm.
 
 In Phase~1, the algorithm selects a basis $B$ of $\mathcal{M}$, which clearly consists of $r$ elements.
 
 In Phase~2, for each color $c$, if the deletion of the elements of color $c$ decreases the rank of the set of the so far selected elements, then the algorithm selects some additional elements of some colors different from $c$ to avoid this happening. More precisely, if the deletion of the elements of color $c$ decreases the rank of the set of the so far selected elements by $x_c$, then the algorithm selects $x_c$ new elements of some colors different from $c$. For any color $c$, let $y_c$ denote the number of elements of color $c$ in the basis $B$ found in the first step. Since $B$ is a basis, the deletion of $y_c$ elements from $B$ decreases its rank by exactly $y_c$. Note that during Phase~2, always some superset of $B$ is selected, which implies that the deletion of the elements of color $c$ decreases the rank of the set of the so far selected elements by at most $y_c$. Thus the algorithm selects at most $y_c$ elements for every color $c$ in Phase~2. Therefore, at most
 \[ r + \sum_{c \in C} y_c = r + |B| = 2r \]
 elements are selected when the algorithm terminates.
 
 
 \medskip
 
 \textit{Time complexity.} Next we prove that the algorithm runs in polynomial time if $\mathcal{M}$ is given by an independence oracle.
 
 In Phase~1, the algorithm selects a basis, which can be done in $O \big( |S| \big)$ time with the greedy algorithm.
 
 In Phase~2, for every color $c \in C$, we remove the elements of color $c$ from the set of the so far selected elements --- which can be done in $O \big( |S| \big)$ time ---, then we select some additional elements with the use of Subroutine IncreaseRank --- which can be done in $O \big( |S|^2 \big)$ time. Thus the algorithm takes $O \big( |C| \cdot |S|^2 \big)$ steps in Phase~2.
 
 
 Therefore, the algorithm runs in $O \big( |C| \cdot |S|^2 \big)$, i.e.\ in polynomial time.
\end{proof}

\medskip

As is clear from the proof of Theorem~\ref{thm:edgeappr}, if $k=1$ then Algorithm~\ref{alg:matr} always finds an optimal solution. In the following, we show that there exist inputs for which the approximation ratio of Algorithm~\ref{alg:matr} is exactly $\frac{2(k-1)}{k}$ if $k \ge 2$, i.e., we present some courteously colored matroids --- more precisely, some edge-color-avoiding connected graphs $G$ --- for which the output of Algorithm~\ref{alg:matr} can have exactly $2(n-1)$ edges, while the optimal solution has $\frac{k \cdot (n-1)}{k-1}$ edges. Let us define the edge-colored graph $G=(V,E)$ as follows. Let
\[ C \colonequals \{0, 1, \ldots, k-1\} \]
be the color set, let
\[ V \colonequals \{ v_0, \ldots, v_{n-1} \} \]
with $(k-1) \mid (n-1)$, and let
\[ E_i \colonequals \big\{ v_j v_{j+1} \bigm| j \in \{ 0, 1, \ldots, n-2 \} \text{ and } j \equiv i \!\!\! \pmod{k-1} \big\} \]
and
\[ E'_i \colonequals \big\{ v_j v_{j+1} \bigm| j \in \{ 0, 1, \ldots, n-2 \} \text{ and } j+1 \equiv i \!\!\! \pmod{k-1} \big\} \]
be the sets of edges of color $i$ for any $i \in \{ 0, 1, \ldots, k-2 \}$, and let
\[ E_{k-1} \colonequals \big\{ v_j v_{j+k-1} \bigm| j \in \{ 0, \ldots, n-2 \} \text{ and } j \equiv 0 \!\!\! \pmod{k-1} \big\} \]
be the set of edges of color $k-1$, and let $E = \big( \bigcup_{i \in C} E_i \big) \cup \big( \bigcup_{i \in C \setminus \{ k-1 \}} E'_i \big)$. For an example, see Figure~\ref{fig:tight-constr}.

By Theorem~\ref{thm:edge}, the subgraph $\big( V, \bigcup_{i\in C} E_i)$ is an optimal solution with $\frac{k \cdot (n-1)}{k-1}$ edges. However, the output of the algorithm can also be the subgraph $\big( V, \bigcup_{i \in C \setminus \{ k-1 \}} ( E_i \cup E'_i) \big)$. To see this, first note that the edges of both $\bigcup_{i \in C \setminus \{ k-1 \}} E_i$ and of $\bigcup_{i \in C \setminus \{ k-1 \}} E'_i$ form Hamiltonian paths $v_0 v_1 \ldots v_{n-1}$, which are disjoint with each other. Thus the algorithm can select the edges of $\bigcup_{i \in C \setminus \{ k-1 \}} E_i$ in Phase~1 --- these edges obviously form a spanning tree ---, then it can select all the edges of $\bigcup_{i \in C \setminus \{ k-1 \}} E'_i$ in Phase~2; and all the so far selected edges clearly form an edge-color-avoiding connected graph. Clearly, this output has $2(n-1)$ edges, therefore the approximation ratio in this case is indeed $\frac{2(k-1)}{k}$.

\begin{figure}[H]
\centering
\begin{tikzpicture}[scale=2.3]
 \tikzstyle{vertex}=[draw,circle,fill,minimum size=8,inner sep=0]
 \tikzset{paint/.style={draw=#1!50!black, fill=#1!50}, decorate with/.style = {decorate, decoration={shape backgrounds, shape=#1, shape size = 4.5pt, shape sep = 6pt}}}
 \tikzstyle{edge_red}=[draw, decorate with = isosceles triangle, paint = red]
 \tikzstyle{edge_blue}=[draw, decorate with = rectangle, decoration = {shape size = 4pt, shape sep = 6.5pt}, paint = blue]
 \tikzstyle{edge_green}=[draw, decorate with = diamond, paint = green]
 \tikzstyle{edge_yellow}=[draw, decorate with = star, paint = yellow]
 
 \node[vertex] (a1) at (1,0) [label={below:$v_0$}] {};
 \node[vertex] (a2) at (2,0) [label={below:$v_1$}] {};
 \node[vertex] (a3) at (3,0) [label={below:$v_2$}] {};
 \node[vertex] (a4) at (4,0) [label={below:$v_3$}] {};
 \node[vertex] (a5) at (5,0) [label={below:$v_4$}] {};
 \node[vertex] (a6) at (6,0) [label={below:$v_5$}] {};
 \node[vertex] (a7) at (7,0) [label={below:$v_6$}] {};
 
 \draw[edge_red, bend left = 35] (a1) to (a2);
 \draw[edge_red, bend left = 35] (a3) to (a4);
 \draw[edge_red, bend left = 35] (a5) to (a6);
 
 \draw[edge_red, bend right = 35] (a2) to (a3);
 \draw[edge_red, bend right = 35] (a4) to (a5);
 \draw[edge_red, bend right = 35] (a6) to (a7);
 
 \draw[edge_blue, bend left = 35] (a2) to (a3);
 \draw[edge_blue, bend left = 35] (a4) to (a5);
 \draw[edge_blue, bend left = 35] (a6) to (a7);
 
 \draw[edge_blue, bend right = 35] (a1) to (a2);
 \draw[edge_blue, bend right = 35] (a3) to (a4); 
 \draw[edge_blue, bend right = 35] (a5) to (a6);
 
 \draw[edge_green, bend left = 50] (a1) to (a3);
 \draw[edge_green, paint=green, bend left = 50] (a3) to (a5);
 \draw[edge_green, bend left = 50] (a5) to (a7);
\end{tikzpicture}
\caption{An edge-color-avoiding connected graph colored with $k=3$ colors and on $n = 7$ vertices, which contains an edge-color-avoiding connected spanning subgraph with $\frac{k \cdot (n-1)}{k-1} = 9$ edges --- for example, such a subgraph is spanned by the green (denoted by rhombi) edges and the lower red (denoted by triangles) and blue (denoted by squares) edges --- and an edge-color-avoiding connected spanning subgraph with $2(n-1) = 12$ edges --- that subgraph is spanned by the red and blue edges. The output of Algorithm~\ref{alg:matr} can be this latter subgraph, resulting in an approximation ratio of exactly $\frac{2(k-1)}{k}$.}
\label{fig:tight-constr}
\end{figure} 

\begin{remark} \label{remark:third_phase}
 We note that the following third phase could be added to Algorithm~\ref{alg:matr}. For every selected element $s$, we check whether for any color $c \in C$, the deletion of $s$ and all the selected elements of color $c$ decreases the rank of the set of the so far selected elements. If yes, then we keep $s$ in the set of the so far selected elements, if no, then we deselect $s$. Unfortunately, this modification does not give a better approximation factor: it is not difficult to see that for the above described example, no edges get deselected in this new phase.
\end{remark}

\begin{remark}
 Let us point out that the following modification of Subroutine IncreaseRank runs in $O \big( |S| \big)$ time --- compared to the current version which runs in $O \big( |S|^2 \big)$ time. Let us assign weight 0 to the elements of $T$ and weight 1 to all the other elements, then find a minimum-weight basis in the matroid with the greedy algorithm. Clearly, the weight of a minimum-weight basis $B$ is $r(S) - r(T)$, thus $T' \colonequals B \cup T$ has the desired properties: $T \subseteq T'$ and $r(T') = r(S)$ and $|T'| = |T| + r(S) - r(T)$. Using this modified subroutine IncreaseRank, Algorithm~\ref{alg:matr} is still a $\frac{2(k-1)}{k}$-approximation algorithm and has a running time $O \big( |C| \cdot |S| \big)$.
\end{remark}

The proof of Theorem~\ref{thm:edgeappr} also implies that a courteously colored matroid $\mathcal{M}$ which does not contain any courteously colored rank-preserving restriction to a proper subset of its ground set has at most $2r$ elements. In the following, we show that this upper bound is tight. Let $G=(V,E)$ denote the underlying graph of the courteously colored matroid $\mathcal{M}$, and let us define $G$ as follows. Let
\[ C \colonequals \{0, 1, \ldots, k-1\} \]
be the color set, let
\[ V \colonequals \{ v_0, \ldots, v_{n-1} \} \text{,} \]
and let
\[ E_i \colonequals \big\{ v_j v_{j+1} \bigm| j \in \{ 0, 1, \ldots, n-2 \} \text{ and } j \equiv i \!\!\! \pmod{k-1} \big\} \]
and
\[ E'_i \colonequals \big\{ v_j v_{j+1} \bigm| j \in \{ 0, 1, \ldots, n-2 \} \text{ and } j+1 \equiv i \!\!\! \pmod{k-1} \big\} \]
be the sets of edges of color $i$ for any $i \in C$, and let $E = \bigcup_{i \in C} (E_i \cup E'_i)$. For an example, see Figure~\ref{fig:edgemax}.


\begin{figure}[H]
\centering
\begin{tikzpicture}[scale=2.25]
 \tikzstyle{vertex}=[draw,circle,fill,minimum size=8,inner sep=0]
 \tikzset{paint/.style={draw=#1!50!black, fill=#1!50}, decorate with/.style = {decorate, decoration={shape backgrounds, shape=#1, shape size = 4.5pt, shape sep = 6pt}}}
 \tikzstyle{edge_red}=[draw, decorate with = isosceles triangle, paint = red]
 \tikzstyle{edge_blue}=[draw, decorate with = rectangle, decoration = {shape size = 4pt, shape sep = 6.5pt}, paint = blue]
 \tikzstyle{edge_green}=[draw, decorate with = diamond, paint = green]
 \tikzstyle{edge_yellow}=[draw, decorate with = star, paint = yellow]
 
 \node[vertex] (a1) at (1,0) [label={below:$v_0$}] {};
 \node[vertex] (a2) at (2,0) [label={below:$v_1$}] {};
 \node[vertex] (a3) at (3,0) [label={below:$v_2$}] {};
 \node[vertex] (a4) at (4,0) [label={below:$v_3$}] {};
 \node[vertex] (a5) at (5,0) [label={below:$v_4$}] {};
 \node[vertex] (a6) at (6,0) [label={below:$v_5$}] {};
 \node[vertex] (a7) at (7,0) [label={below:$v_6$}] {};
 \node[vertex] (a8) at (8,0) [label={below:$v_7$}] {};
 
 \draw[edge_red, bend left = 40] (a1) to (a2);
 \draw[edge_red, bend left = 40] (a5) to (a6);
 \draw[edge_red, bend right = 40] (a4) to (a5);
 
 \draw[edge_blue, bend left = 40] (a2) to (a3);
 \draw[edge_blue, bend left = 40] (a6) to (a7);
 \draw[edge_blue, bend right = 40] (a1) to (a2);
 \draw[edge_blue, bend right = 40] (a5) to (a6);
 
 \draw[edge_green, bend left = 40] (a3) to (a4);
 \draw[edge_green, bend left = 40] (a7) to (a8);
 \draw[edge_green, bend right = 40] (a2) to (a3);
 \draw[edge_green, bend right = 40] (a6) to (a7);
 
 \draw[edge_yellow, bend left=40] (a4) to (a5);
 \draw[edge_yellow, bend right=40] (a3) to (a4);
 \draw[edge_yellow, bend right=40] (a7) to (a8);
\end{tikzpicture}
\caption{An edge-color-avoiding connected graph on $n = 8$ vertices and with $2(n-1) = 14$ edges colored with exactly $k = 4$ colors, and having the property that none of the edges can be removed such that the graph remains edge-color-avoiding connected.}
\label{fig:edgemax}
\end{figure} 

Therefore, we obtained the following result.

\begin{corollary} \label{corollary}
 Let $\mathcal{M} = (S, \mathcal{I})$ be a courteously colored matroid with rank $r$ such that $\mathcal{M} \setminus \{ s \}$ is not courteously colored for all $s \in S$. Then $|S| \le 2r$ and this upper bound is sharp.
\end{corollary}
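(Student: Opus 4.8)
The plan is to prove the upper bound $|S| \le 2r$ by reducing the hypothesis to the situation already analyzed in the proof of Theorem~\ref{thm:edgeappr}, and then to establish sharpness by verifying that the graph $G$ defined just before Figure~\ref{fig:edgemax} yields a courteously colored graphic matroid with $|S| = 2r$ that meets the hypothesis.

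For the upper bound I would invoke the fact highlighted immediately before the corollary, namely that any courteously colored matroid possessing no courteously colored rank-preserving restriction to a \emph{proper} subset of its ground set has at most $2r$ elements. It then suffices to show that the hypothesis of the corollary forces $\mathcal{M}$ to have no such proper restriction, and I would argue this by contraposition. Suppose $\mathcal{M}$ admitted a courteously colored rank-preserving restriction $\mathcal{M}\big|_T$ with $T \subsetneq S$, and fix any $s \in S - T$. Since $r(T) = r$ and $T \subseteq S - \{s\}$, the deletion $\mathcal{M} \setminus \{s\}$ still has rank $r$. Moreover, for every color $c$, the courteous coloring of $\mathcal{M}\big|_T$ means that $T$ with its color-$c$ elements removed already has rank $r$; as this set is contained in the ground set of $(\mathcal{M} \setminus \{s\})_{\overline{c}}$, the latter also has rank $r$. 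Hence $\mathcal{M} \setminus \{s\}$ would be courteously colored, contradicting the assumption that $\mathcal{M} \setminus \{s\}$ is not courteously colored for any $s$. Therefore no proper rank-preserving courteous restriction exists and $|S| \le 2r$.

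For sharpness I would show that the graphic matroid of $G$ attains the bound for every admissible $n$ and $k$. Structurally, $G$ is the path $v_0 v_1 \cdots v_{n-1}$ in which every consecutive pair $v_j v_{j+1}$ carries two parallel edges of two \emph{distinct} colors, the colors being cycled modulo $k-1$ together with the extra color $k-1$ so that all $k$ colors occur. Since $G$ is connected on $n$ vertices, its graphic matroid has rank $r = n-1$, and the doubling gives exactly $2(n-1) = 2r$ edges. I would then check the two critical properties. First, $G$ is edge-color-avoiding connected: deleting all edges of a single color $c$ removes at most one of the two edges in each pair (the colors of a pair differ), so every pair retains an edge and a spanning path survives. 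Second, $G - e$ fails to be edge-color-avoiding connected for every edge $e$: after removing $e$, its pair becomes monochromatic of some color $c$, and deleting the remaining edges of color $c$ then severs $v_j$ from $v_{j+1}$. Passing to the graphic matroid, these translate respectively into ``$\mathcal{M}$ is courteously colored'' and ``$\mathcal{M} \setminus \{s\}$ is not courteously colored for all $s$,'' so the hypothesis holds and $|S| = 2r$ is achieved.

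The step I expect to require the most care is the simultaneous verification in the construction that $G$ survives the removal of any single color class yet is destroyed by the removal of any single edge. The doubled-path structure with each rung carrying two distinctly colored edges is precisely what makes both hold at once, but one must confirm that the two edges of every rung genuinely receive different colors (which relies on $j \not\equiv j+1 \pmod{k-1}$, handling the small case $k = 2$ directly) and that all $k$ colors actually appear so the example is colored with exactly $k$ colors.
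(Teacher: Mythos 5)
Your proof is correct and follows essentially the same route as the paper: the upper bound is obtained from the observation after Theorem~\ref{thm:edgeappr} that a courteously colored matroid with no proper courteously colored rank-preserving restriction has at most $2r$ elements, and sharpness is witnessed by the doubled-path graphic matroid of Figure~\ref{fig:edgemax}. Your explicit contrapositive argument connecting the corollary's hypothesis (no single element can be deleted) to the ``no proper restriction'' property, and your insistence on checking that the two parallel edges of each rung receive distinct colors (where the paper's $\bmod\,(k-1)$ indexing must be read with care), are details the paper leaves implicit, and you handle both correctly.
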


Let us point out that the edge-colored graphs we presented for showing that the approximation factor of Algorithm~\ref{alg:matr} is exactly $\frac{2(k-1)}{k}$ contain both a construction with minimum possible number of elements --- for this construction, see the proof of Theorem~\ref{thm:edge} --- and a construction with maximum possible number of elements --- see the construction above.

\begin{remark} \label{remark:simple_alg}
 Using Theorem~\ref{thm:edge} and Corollary~\ref{corollary}, one can design a simpler polynomial-time $\frac{2(k-1)}{k}$-approximation algorithm for finding courteously colored rank-preserving restrictions of a matroid $\mathcal{M}$ to a set of minimum size: one by one greedily delete those elements of the ground set from $\mathcal{M}$ after whose deletion the obtained matroid is courteously colored and has the same rank as $\mathcal{M}$. By the same reasoning as that for Algorithm~\ref{alg:matr}, we get that there exist inputs for which the approximation ratio of this greedy algorithm is exactly $\frac{2(k-1)}{k}$, and the running time of this algorithm is $O \big( |C| \cdot |S|^2 \big)$.
\end{remark}

\subsection{Approximation algorithm for edge-color-avoiding connected graphs}

As we observed before Definition~\ref{def:courteous_matroid}, a graphic matroid is courteously colored if and only if each component of the corresponding graph is edge-color-avoiding connected, which implies that the underlying graph of a courteously colored graphic matroid is not necessarily edge-color-avoiding connected. Therefore, we also present a separate polynomial-time approximation algorithm for finding edge-color-avoiding connected spanning subgraphs with minimum number of edges. Note that by Corollary~\ref{cor:ECA-subgraph_NP-complete}, this problem is NP-hard.

To simplify the description of the algorithm, we introduce the following simple subroutines. The subroutine \textbf{Graph}$(V,E)$ creates a graph with vertex set $V$ and edge set $E$, the subroutine \textbf{SpanningTree}$(G)$ returns the edges of some arbitrary spanning tree of a connected graph $G$, and the subroutine \textbf{ConnectedComponents}$(G)$ returns the family of the vertex sets of the connected components of $G$. The subroutine \textbf{ContractVertices}$(G, \mathcal{W})$, whose inputs are a graph $G$ and a partition $\mathcal{W}$ of its vertex set, returns a graph which can be obtained from $G$ by contracting the vertices of each set $W \in \mathcal{W}$ into a single vertex, and two distinct vertices of the obtained graph is connected by a colored edge if and only if there is an edge of the same color between the corresponding sets in the original graph. Finally, the subroutine \textbf{BeforeContraction}$(G, H, E')$, whose inputs are a graph $G$, and a graph $H$ that is obtained from $G$ by contracting some of its vertices, and an edge set $E' \subseteq E(H)$, returns a set of edges in $G$ corresponding to~$E'$.

Now we are ready to present Algorithm~\ref{alg:edge}, which does the following. Given an edge-color-avoiding connected graph $G = (V, E)$ colored with a color set $C$, first we pick an arbitrary spanning tree of $G$. Then for any color $c \in C$, if the removal of the edges of color $c$ disconnects the graph of the so far selected edges, then we contract the components of this latter graph, and we select some edges in the original graph which correspond to the edges of an arbitrary spanning tree in the contracted graph. At the end, the selected edges clearly form an edge-color-avoiding connected spanning subgraph.

\begin{algorithm}[H]
\caption{Finding edge-color-avoiding connected spanning subgraphs} \label{alg:edge}
\begin{algorithmic}[1]
 \Require An edge-color-avoiding connected graph $G = (V, E)$ colored with a color set $C$.
 \Ensure An edge-color-avoiding connected spanning subgraph $G'$ of $G$.
 \State $E' \gets \textrm{SpanningTree}(G)$ \hfill \texttt{\textcolor{black!50}{//~Phase~1}}
 \State $G' \gets \textrm{Graph}(V, E')$
 \For{$c\in C$} \hfill \texttt{\textcolor{black!50}{//~Phase~2}}
     \If{$G'_{\overline{c}}$ is not connected}
         \State $\mathcal{W} \gets \textrm{ConnectedComponents}(G'_{\overline{c}})$
         \State $H \gets \textrm{ContractVertices}(G_{\overline{c}},\mathcal{W})$
         \State $E' \gets E' \cup \textrm{BeforeContraction} \big( G, \; H, \; \textrm{SpanningTree}(H) \big)$
         \State $G' \gets \text{Graph}(V, E')$
     \EndIf
 \EndFor
 \State \Return $G'$
\end{algorithmic}
\end{algorithm}

Similarly to Algorithm~\ref{alg:matr}, at the end of Algorithm~\ref{alg:edge} we can greedily deselect those edges whose removal from the graph of the so far selected edges does not destroy the edge-color-avoiding property of this graph.

Analogously to Theorem~\ref{thm:complexity}, we can obtain the following.

\begin{theorem} \label{thm:edgeappr2}
 Algorithm~\ref{alg:edge} is a polynomial-time $\frac{2(k-1)}{k}$-approximation algorithm for the problem of finding an edge-color-avoiding connected spanning subgraph with minimum number of edges in a graph whose edges are colored with exactly $k \in \mathbb{Z}_+$ colors. Moreover, there exist inputs for which the approximation ratio is exactly $\frac{2(k-1)}{k}$.
\end{theorem}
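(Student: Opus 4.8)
The plan is to mirror the proof of Theorem~\ref{thm:edgeappr} almost line by line, translating the matroid vocabulary into graph language: a \emph{basis} of the graphic matroid becomes a spanning tree, \emph{rank} becomes ``$n-1$ minus the number of components'', and the call to Subroutine IncreaseRank becomes the contract-and-spanning-tree step of Phase~2. I would organize the argument into correctness, approximation factor, running time, and finally tightness of the ratio. Throughout I assume $k\ge 2$, since for $k=1$ an edge-color-avoiding connected graph can only be a single vertex, in which case Algorithm~\ref{alg:edge} trivially returns the optimum.

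For correctness, I would first note that because $G$ is edge-color-avoiding connected, $G_{\overline c}$ is connected for every color $c$; hence the contracted graph $H=\textrm{ContractVertices}(G_{\overline c},\mathcal W)$ is connected and every call to SpanningTree is well-defined. After Phase~1 the set $E'$ is a spanning tree, so $G'$ is connected and spanning. In each Phase~2 iteration, whenever $G'_{\overline c}$ is disconnected we add edges of $G_{\overline c}$ (so none of color $c$) whose images in $H$ form a spanning tree; these reconnect the contracted components and make $G'_{\overline c}$ connected. The crucial monotonicity remark is that adding any further edges in later iterations — of whatever color — can only merge components, so once $G'_{\overline c}$ is connected it stays connected; thus at termination $G'_{\overline c}$ is connected for all $c$, i.e.\ $G'$ is an edge-color-avoiding connected spanning subgraph.

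For the approximation factor, let $n=\lvert V\rvert$ and let $T$ be the spanning tree chosen in Phase~1, so $\lvert T\rvert=n-1$. Writing $y_c$ for the number of color-$c$ edges of $T$, we have $\sum_{c\in C}y_c=n-1$. Deleting the $y_c$ edges of color $c$ from the tree $T$ leaves exactly $y_c+1$ components, and since $T\subseteq E'$ holds throughout the run, the graph $G'_{\overline c}$ processed for color $c$ has at most $y_c+1$ components. Hence the contracted graph $H$ has at most $y_c+1$ vertices, so its spanning tree, and thus the edge set added for color $c$, has at most $y_c$ edges. Summing over all colors gives
\[
\lvert E'\rvert \le (n-1)+\sum_{c\in C}y_c = 2(n-1).
\]
Since the graphic matroid of a connected graph on $n$ vertices has rank $r=n-1$, Theorem~\ref{thm:edge} shows that every edge-color-avoiding connected spanning subgraph has at least $\big\lceil \frac{k(n-1)}{k-1}\big\rceil$ edges; dividing the bound $2(n-1)$ by this quantity yields the factor $\frac{2(k-1)}{k}$. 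The running time is immediate: each of the $\lvert C\rvert$ iterations only invokes standard graph subroutines (connectivity testing, vertex contraction, spanning tree), all polynomial in $\lvert V\rvert$ and $\lvert E\rvert$, so the whole algorithm is polynomial.

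For the tightness of the ratio I would reuse the edge-colored graph of Figure~\ref{fig:tight-constr} (the construction with edge sets $E_i$, $E'_i$, and $E_{k-1}$): Algorithm~\ref{alg:edge} may pick the spanning tree $\bigcup_{i\in C\setminus\{k-1\}}E_i$ in Phase~1 and then add all of $\bigcup_{i\in C\setminus\{k-1\}}E'_i$ in Phase~2, producing $2(n-1)$ edges, while an optimal solution has $\frac{k(n-1)}{k-1}$ edges. The main obstacle is the per-color bound in the approximation argument: one must argue carefully that, precisely because the Phase~1 tree $T$ persists as a subset of $E'$, the number of components of $G'_{\overline c}$ never exceeds $y_c+1$, which is exactly what caps the number of edges added for color $c$ at $y_c$ and drives the total to $2(n-1)$.
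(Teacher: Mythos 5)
Your proposal is correct and matches the paper's intent exactly: the paper proves Theorem~\ref{thm:edgeappr2} simply by remarking that the proof of Theorem~\ref{thm:edgeappr} carries over verbatim with ``spanning tree'' in place of ``basis'', and your argument is precisely that translation carried out in detail (including the per-color bound of $y_c$ added edges via the component count of $T_{\overline{c}}$, and the reuse of the construction of Figure~\ref{fig:tight-constr} for tightness). No gaps.
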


By simply using the term ``spanning tree'' as opposed to ``basis'', the same proof as that of Theorem~\ref{thm:edgeappr} applies to prove Theorem~\ref{thm:edgeappr2}, but for a detailed proof, see~\cite{diplomamunka}.

\subsection{Approximation algorithm for vertex-color-avoiding connected graphs}

In the following, we give a polynomial-time approximation algorithm for finding a vertex-color-avoiding connected spanning subgraph with minimum number of edges. But before that, we give a lower bound on the number of edges in a vertex-color-avoiding connected graph.

\begin{theorem} \label{thm:vertex}
 Let $G$ be a vertex-color-avoiding connected graph on $n$ vertices colored with exactly $k \in \mathbb{Z}_+$ colors. Then
 \[ \big| E(G) \big| \ge \begin{cases}
                          n-1 & \text{if $k \le 2$,} \\
                          n & \text{if $k \ge 3$,}
                         \end{cases} \]
 and this lower bound is sharp.
\end{theorem}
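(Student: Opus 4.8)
The plan is to prove the two cases of the lower bound separately and then supply matching constructions for sharpness. The guiding observation I would record first is a convenient reformulation: a vertex-colored graph $G$ is vertex-color-avoiding connected if and only if $G$ is connected and, for every color $c$, the graph $G_{\overline{c}}$ (Notation~\ref{not:comp}) is connected or empty. Indeed, for two vertices $u,v$ neither of which has color $c$, being vertex-$c$-avoiding connected is precisely the existence of a $u$–$v$ path inside $G_{\overline{c}}$, whereas a pair with an endpoint of color $c$ only demands a $u$–$v$ path in $G$. In particular $G$ must be connected, so $\big| E(G) \big| \ge n-1$, which already establishes the claimed bound when $k \le 2$.

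For $k \ge 3$ the plan is to show that $G$ cannot be a tree, which forces $\big| E(G) \big| \ge n$. So suppose $G = T$ is a tree. Writing $V_c$ for the set of vertices of color $c$, the reformulation gives that $T_{\overline{c}} = T - V_c$ is connected for every $c$; since paths in a tree are unique, this means the unique path between any two vertices avoiding color $c$ contains no vertex of color $c$. Now I would choose vertices $x,y,z$ of three distinct colors $a,b,c$ (possible because $k \ge 3$) and let $m$ be their \emph{median}, the unique vertex lying on all three pairwise paths $P_{xy}$, $P_{yz}$, $P_{xz}$. Because $P_{yz}$ avoids $a$, $P_{xz}$ avoids $b$, and $P_{xy}$ avoids $c$, and $m$ lies on all three, the color of $m$ is none of $a,b,c$; in particular $m \notin \{x,y,z\}$, so $m$ is an internal vertex on each pairwise path and $T - m$ separates $x,y,z$ into three distinct components. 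Writing $c^{*}$ for the color of $m$, the vertices $x,y,z$ all survive in $T_{\overline{c^{*}}}$ (their colors are $a,b,c \neq c^{*}$) yet remain in distinct components, contradicting the connectivity of $T_{\overline{c^{*}}}$. Hence no such tree exists and the bound $\big| E(G) \big| \ge n$ follows.

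For sharpness I would exhibit extremal graphs, verifying them through the same reformulation. When $k=1$ any spanning tree (e.g.\ a path) with all vertices of a single color is vertex-color-avoiding connected, and when $k=2$ a path whose two color classes are its two contiguous halves works, since deleting either color leaves the other half, still a path; both examples have exactly $n-1$ edges. When $k \ge 3$ I would take the cycle $C_n$ and partition its vertices into $k$ nonempty contiguous arcs, one per color: deleting all vertices of any single color leaves exactly one arc, i.e.\ a path, so $G_{\overline{c}}$ is connected for every $c$ while $G$ itself is connected; this graph uses exactly $k$ colors and has precisely $n$ edges.

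The routine parts are the reformulation and the constructions. The crux is the $k \ge 3$ argument, and the point I expect to require the most care is justifying that uniqueness of tree paths upgrades mere vertex-color-avoiding connectivity into honest connectivity of each $T - V_c$, together with the properties of the median (its existence and uniqueness, the fact that its color differs from $a,b,c$ forcing $m \notin \{x,y,z\}$, and that it genuinely separates the three vertices). These all follow from standard facts about medians in trees, so I anticipate no serious obstacle once the reformulation is in place.
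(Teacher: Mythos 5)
Your proof is correct, and its core --- the $k \ge 3$ lower bound --- takes a genuinely different route from the paper's. The paper first establishes Lemma~\ref{lem:components} (if $v$ is a cut-vertex of a vertex-color-avoiding connected graph, all but one component of $G-v$ consist only of vertices of the color of $v$) and then argues on a hypothetical tree by cases: either all non-leaf vertices share a color, in which case two leaves of other colors fail to be color-avoiding connected, or two adjacent non-leaf vertices have distinct colors, and applying the lemma to both forces every vertex into one of those two color classes, contradicting $k \ge 3$. You instead reformulate vertex-color-avoiding connectivity as connectivity (or emptiness) of every $G_{\overline{c}}$ --- a correct equivalence that the paper itself uses implicitly, e.g.\ in the proof of Theorem~\ref{thm:vertexappr} --- upgrade it via uniqueness of tree paths, and apply it to the median of three vertices of distinct colors $a,b,c$: the median's color lies outside $\{a,b,c\}$, so deleting its color class separates three surviving vertices. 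Your argument is arguably cleaner, needing no auxiliary cut-vertex lemma and no case split, and it pinpoints the contradiction at a single canonical vertex; what the paper's route buys is the explicit structural statement of Lemma~\ref{lem:components}, though that lemma is not reused elsewhere. Your sharpness constructions (monochromatic tree for $k=1$, a path split into two contiguous color classes for $k=2$, a cycle partitioned into $k$ nonempty contiguous arcs for $k \ge 3$) agree with the paper's up to inessential variation and are verified correctly via your reformulation. One cosmetic remark: for $k=3$ the deduction that the median's color avoids $\{a,b,c\}$ is already absurd on its own, since no fourth color exists; as you are arguing by contradiction this merely means the contradiction arrives one step early, and the argument is valid for all $k \ge 3$.
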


To prove this theorem, we use the following lemma. By a \emph{cut-vertex}, we mean a vertex whose removal disconnects the graph.

\begin{lemma} \label{lem:components}
 If $v$ is a cut-vertex in a vertex-color-avoiding connected graph $G$, then all but one component of $G - v$ consist only of vertices of the color of $v$.
\end{lemma}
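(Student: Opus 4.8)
The plan is to argue by contradiction, exploiting the fact that every path between two distinct components of $G - v$ must pass through $v$. Let $c$ denote the color of $v$, and suppose, contrary to the claim, that at least two components of $G - v$ each contain a vertex whose color differs from $c$. I would fix two such components $A$ and $B$ and choose vertices $a \in A$ and $b \in B$ whose colors are both different from $c$; the point of this choice becomes clear below.

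The key observation is that since $A$ and $B$ are distinct components of $G - v$, every $a$-$b$ path in $G$ must contain $v$. Indeed, a path avoiding $v$ would lie entirely in $G - v$ and connect $a$ to $b$, contradicting that they belong to different components. Because $v$ has color $c$, it follows that no $a$-$b$ path avoids color $c$.

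I would then test the pair $a, b$ against the definition of vertex-$c$-avoiding connectivity. That definition requires an $a$-$b$ path together with one of two escape clauses: either at least one endpoint has color $c$, or some $a$-$b$ path contains no vertex of color $c$. By our choice, neither $a$ nor $b$ has color $c$, so the first clause fails; and by the observation above, every $a$-$b$ path contains $v$, a vertex of color $c$, so the second clause fails as well. Hence $a$ and $b$ are not vertex-$c$-avoiding connected, contradicting the assumption that $G$ is vertex-color-avoiding connected. This forces at most one component of $G - v$ to contain a vertex whose color differs from $c$, which is precisely the assertion that all but one component consist only of vertices of the color of $v$.

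There is no serious obstacle here; the argument is a direct application of the definition. The only subtlety worth flagging is the endpoint-color escape clause in the definition of vertex-$c$-avoiding connectivity: it is exactly the reason the contradiction requires selecting $a$ and $b$ to avoid color $c$. Choosing arbitrary representatives of $A$ and $B$ would not suffice, since if, say, $a$ happened to have color $c$, the pair would be vertex-$c$-avoiding connected trivially and no contradiction would arise.
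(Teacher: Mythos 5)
Your proof is correct and follows exactly the paper's argument: assume two components of $G-v$ each contain a vertex of a color other than that of $v$, pick such representatives, and observe that every path between them passes through $v$, so they fail to be vertex-$c$-avoiding connected. You simply spell out in more detail the verification that both clauses of the definition fail, which the paper leaves implicit.
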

\begin{proof}
 Let $G$ be a vertex-color-avoiding connected graph with a cut-vertex $v$ of color $c$. Suppose to the contrary that there are at least two components in $G-v$ which contain vertices of some colors different from $c$. Let $v_1$ and $v_2$ be two vertices such that their colors are different from $c$ and they are in different components in $G-v$. Then $v_1$ and $v_2$ are not vertex-$c$-avoiding connected, which is a contradiction.
\end{proof}

Now we prove Theorem~\ref{thm:vertex}.

\begin{proof}[Proof of Theorem~\ref{thm:vertex}]

 

 First, let $k \le 2$. By the definition of vertex-color-avoiding connectivity, $G$ must be connected, so it must have at least $n-1$ edges. Clearly, any $n$-vertex tree whose every vertex except at most one of its leaves is of the same color achieves this lower bound.
 
 \medskip
 
 Now let $k \ge 3$. First, we show that any vertex-color-avoiding connected graph $G$ on $n$ vertices colored with exactly $k \ge 3$ colors must have at least $n$ edges. Suppose to the contrary that there exists such a graph with $n-1$ edges. By the definition of vertex-color-avoiding connectivity, $G$ must be connected. Thus $G$ is a tree. Clearly, $n \ge k \ge 3$, thus $G$ has at least two leaves. If the non-leaf vertices are all of the same color $c$, then there exist at least two leaves $u$ and $v$ of some colors different from $c$. Then $u$ and $v$ are not vertex-$c$-avoiding connected, which is a contradiction. Therefore, there exist two adjacent non-leaf vertices $v_1$ and $v_2$ of colors $c_1$ and $c_2$, respectively, where $c_1 \ne c_2$. By Lemma~\ref{lem:components}, since $v_1$ is a cut-vertex, all components of $G-v_1$ but one contain vertices only of color $c_1$. The remaining one component must be the one which contains $v_2$. The same can be told about $v_2$ and $c_2$. It is easy to see that by the adjacency of $v_1$ and $v_2$ this means that all of the vertices must be of color $c_1$ or $c_2$. This contradicts the fact that the graph is colored with exactly $k \ge 3$ colors. Thus any vertex-color-avoiding connected graph on $n$ vertices colored with exactly $k \ge 3$ colors must have at least $n$ edges.
 
 Now we present a vertex-color-avoiding connected graph $G=(V,E)$ on $n$ vertices colored with exactly $k \ge 3$ colors that has $n$ edges. Let
 \[ V = \{ v_1, v_2, \ldots, v_n \} \]
 with $n\geq 3$ and let
 \[ E = \{ v_1v_2, \, v_2v_3, \, \ldots, \, v_{n-1}v_n, \, v_nv_1\} \text{.} \]
 Let the color function $c$ be
 \[ c \colon V(G) \to \{ 1, 2, \ldots, k \} \qquad v \mapsto \begin{cases}
i & \text{if $v =  v_i$ and $i \le k$,} \\
k & \text{otherwise.}
\end{cases} \]
 For an example, see Figure~\ref{fig:vertexproof}. It is not difficult to see that with this vertex-coloring, $G$ is vertex-color-avoiding connected and has $n$ edges.
\end{proof}

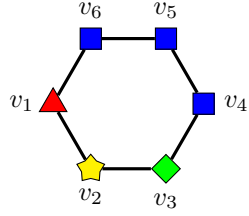
\begin{figure}[H]
\centering
\begin{tikzpicture}[scale=1]
 \tikzstyle{vertex_red}=[draw, shape=regular polygon, regular polygon sides=3, fill=red, minimum size=12pt, inner sep=0]
 \tikzstyle{vertex_blue}=[draw, shape=rectangle, fill=blue, minimum size=8pt, inner sep=0]
 \tikzstyle{vertex_green}=[draw, shape=diamond, fill=green, minimum size=11pt, inner sep=0]
 \tikzstyle{vertex_yellow}=[draw, shape=star, fill=yellow, minimum size=11pt, inner sep=0]
 
 \node[vertex_blue] (a1) at (0:1) [label={right:$v_4$}] {};
 \node[vertex_blue] (a2) at (60:1) [label={above:$v_5$}] {};
 \node[vertex_blue] (a3) at (120:1) [label={above:$v_6$}] {};
 \node[vertex_red] (a4) at (180:1) [label={left:$v_1$}] {};
 \node[vertex_yellow] (a5) at (240:1) [label={below:$v_2$}] {};
 \node[vertex_green] (a6) at (300:1) [label={below:$v_3$}] {};
 \draw[very thick] (a1) -- (a2) -- (a3) -- (a4) -- (a5) -- (a6) -- (a1);
\end{tikzpicture}
\caption{A vertex-color-avoiding connected graph on $n = 6$ vertices colored with exactly $k = 4$ colors and with $n = 6$ edges.}
\label{fig:vertexproof}
\end{figure} 

Now we present a polynomial-time approximation algorithm for finding a vertex-color-avoiding connected spanning subgraph with minimum number of edges. This algorithm, presented as Algorithm~\ref{alg:vertex}, follows the same scheme as Algorithm~\ref{alg:edge} --- the only difference is that here the vertices are colored. Given a vertex-color-avoiding connected graph $G = (V, E)$ colored with a color set $C$, first we pick an arbitrary spanning tree of $G$. Then for any color $c \in C$, if the removal of the vertices of color $c$ disconnects the graph of the so far selected edges, then we contract the components of the graph whose vertex set consists of the vertices which have colors different from $c$ and whose edge set consists of the selected edges. Then we select some edges in the original graph which correspond to the edges of an arbitrary spanning tree in this contracted graph.
At the end, the selected edges clearly form a vertex-color-avoiding connected spanning subgraph.

\begin{algorithm}[H]
\caption{Finding vertex-color-avoiding connected spanning subgraphs} \label{alg:vertex}
\begin{algorithmic}[1]
 \Require A vertex-color-avoiding connected graph $G = (V, E)$ colored with a color set $C$.
 \Ensure A vertex-color-avoiding connected spanning subgraph $G'$ of $G$.
 \State $E' \gets \textrm{SpanningTree}(G)$ \hfill \texttt{\textcolor{black!50}{//~Phase~1}}
 \State $G' \gets \textrm{Graph$(V, E')$}$
 \For{$c\in C$} \hfill \texttt{\textcolor{black!50}{//~Phase~2}}
     \If{$G'_{\overline{c}}$ is not connected}
         \State $\mathcal{W} \gets \textrm{ConnectedComponents}(G'_{\overline{c}})$
         \State $H \gets \textrm{ContractVertices}(G_{\overline{c}},\mathcal{W})$
         \State $E' \gets E' \cup \textrm{BeforeContraction} \big( G, \; H, \; \text{SpanningTree}(H) \big)$
         \State $G' \gets \text{Graph$(V, E')$}$
     \EndIf
 \EndFor
 \State \Return $G'$
\end{algorithmic}
\end{algorithm}



Let us note that similarly to Algorithms~\ref{alg:matr} and~\ref{alg:edge}, at the end of Algorithm~\ref{alg:vertex} we can greedily deselect those edges whose removal from the graph of the so far selected edges does not destroy the vertex-color-avoiding property of this graph.

In the following theorem, we analyze Algorithm~\ref{alg:vertex}.

\begin{theorem} \label{thm:vertexappr}
 Algorithm~\ref{alg:vertex} is a polynomial-time $2$-approximation algorithm for the problem of finding a vertex-color-avoiding connected spanning subgraph with minimum number of edges.
\end{theorem}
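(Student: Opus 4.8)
The plan is to establish three things separately, mirroring the proof of Theorem~\ref{thm:edgeappr}: correctness (the output is a vertex-color-avoiding connected spanning subgraph of $G$), the approximation guarantee, and polynomial running time.

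For \emph{correctness}, the crucial preliminary observation is that for every color $c$ the graph $G_{\overline{c}}$ is connected. Indeed, any two vertices of $G$ not of color $c$ are vertex-$c$-avoiding connected, hence joined by a path avoiding color $c$, i.e.\ a path in $G_{\overline{c}}$; since $G_{\overline{c}}$ contains only vertices of colors different from $c$, it is connected. This guarantees that the contracted graph $H$ is connected, so $\textrm{SpanningTree}(H)$ is well defined. I would then argue that once a color $c$ has been processed, $G'_{\overline{c}}$ is connected: the edges returned by $\textrm{BeforeContraction}$ avoid color $c$ and, via the chosen spanning tree of $H$, merge all components of $G'_{\overline{c}}$ into one. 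Because Phase~2 only ever \emph{adds} edges, this property is preserved for the remainder of the run, so at termination $G'_{\overline{c}}$ is connected for every $c$, which is exactly vertex-color-avoiding connectivity; spanning-ness is immediate from the Phase~1 spanning tree.

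For the \emph{approximation factor}, write $T$ for the Phase~1 spanning tree ($n-1$ edges) and let $\text{OPT}$ denote the optimum, so that $\text{OPT}\ge n-1$ by Theorem~\ref{thm:vertex}. When color $c$ is processed, the number of edges added equals the number of components of the current $G'_{\overline{c}}$ minus one. Since $G'$ always contains $T$, the current $G'_{\overline{c}}$ contains $T_{\overline{c}}$ on the same (non-$c$) vertex set, so it has at most as many components as $T_{\overline{c}}$. The key counting step is the bound that deleting the color-$c$ vertices from the tree $T$ yields at most $1+\sum_{v:\,c(v)=c}\big(\deg_T(v)-1\big)$ components, which I would prove by removing those vertices one at a time and noting each removal raises the component count by at most $\deg_T(v)-1$. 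Thus $(\text{components of }T_{\overline{c}})-1\le \sum_{v:\,c(v)=c}(\deg_T(v)-1)$, and summing over all colors, using that the colors partition $V$, gives $\sum_{v\in V}(\deg_T(v)-1)=2(n-1)-n=n-2$. Hence Phase~2 adds at most $n-2$ edges, so the output has at most $(n-1)+(n-2)=2n-3\le 2(n-1)\le 2\,\text{OPT}$ edges.

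Finally, for \emph{running time}, each of the at most $|C|$ iterations performs a connectivity test, a contraction, and a spanning-tree computation, each polynomial in $|V|+|E|$, so the algorithm is polynomial. I expect the main obstacle to be the approximation analysis rather than correctness: one must be careful that the relevant comparison is against the fixed Phase~1 tree $T$ (exploiting monotonicity of $G'$) rather than the dynamically growing $G'$, and the degree-sum counting argument for the number of tree components after vertex deletion is the heart of the bound. I would also check the degenerate regime $k=1$, where all vertices share a color and the algorithm simply returns the spanning tree, which is optimal and consistent with the $k\le 2$ case of Theorem~\ref{thm:vertex}.
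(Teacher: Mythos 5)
Your proposal is correct and follows essentially the same route as the paper's proof: Phase~1 gives the spanning tree, connectivity of each $G_{\overline{c}}$ (and hence of its contractions) justifies Phase~2, and the approximation bound comes from the identical degree-sum count $\sum_{v\in V}\big(d_T(v)-1\big)=n-2$ over the Phase~1 tree, yielding at most $2n-3\le 2(n-1)$ edges against the lower bound of Theorem~\ref{thm:vertex}. The only differences are presentational --- you spell out why $G_{\overline{c}}$ is connected and make the monotonicity of $G'$ explicit, which the paper leaves implicit.
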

\begin{proof}
 We discuss the time complexity and the correctness of the algorithm separately.
 
 \medskip
 
 \textit{Correctness.} Let $G=(V,E)$ be the input graph whose vertices are colored with exactly $k \in \mathbb{Z}_+$ colors.
 
 Since $G$ is vertex-color-avoiding connected, it is connected as well, thus it has a spanning tree. In Phase~1, the algorithm selects the edges of a spanning tree.
 
 Now we show that in Phase~2, the algorithm selects some additional edges to ensure the vertex-color-avoiding connectivity of the output. Since $G$ is vertex-color-avoiding connected, the graph $G_{\overline{c}}$ is connected for any color $c$ and any graph obtained from $G_{\overline{c}}$ by contracting some disjoint sets of vertices is also connected, thus it has a spanning tree. Therefore, by selecting of the edges of such a spanning tree, the obtained graph of the selected edges becomes vertex-color-avoiding connected.
 
 \medskip
 
 \textit{Approximation factor.} Now we prove that the output has at most $2n - 3$ edges, which is, by Theorem~\ref{thm:vertex}, at most twice as many as the minimum number of edges of a vertex-color-avoiding connected graph, implying that Algorithm~\ref{alg:vertex} is a 2-approximation algorithm.
 
 In Phase~1, the algorithm selects the edges of a spanning tree $T$, which clearly has $n-1$ edges.
 
 In Phase~2, for each color $c$, if the removal of the vertices of color $c$ disconnects the graph of the so far selected edges, then the algorithm selects some additional edges to avoid this happening. More precisely, if the removal of the vertices of color $c$ from the graph of the selected edges leaves $x_c$ connected components, then the algorithm selects $x_c - 1$ new edges of some colors different form $c$. Since $T$ is a spanning tree, the deletion of a vertex $v$ from it creates exactly $d_T(v)$ components, where $d_T(v)$ denotes the degree of $v$ in $T$. Thus the removal of all the vertices of color $c$ from the graph of the so far selected edges disconnects this graph into at most
 \[ 1 + \sum_{\substack{v \in V: \\ c(v)=c}} \big( d_T(v)-1 \big) \]
 components, where $c(v)$ denotes the color of a vertex $v$. Therefore, at the end of Phase~2, at most
 \[ (n-1) + \sum_{c \in C} \sum_{\substack{v \in V: \\ c(v)=c}} \big( d_T(v) - 1 \big) = (n-1) + \sum_{v \in V} d_T(v) - n = (n-1) + 2(n-1) - n = 2n-3 \]
 edges are selected.
 
 \medskip
 
 \textit{Time complexity.} Similarly to Algorithm~\ref{alg:edge}, Algorithm~\ref{alg:vertex} also runs in $O \big( |C| \cdot |V|^2 \big)$ time.
\end{proof}

\medskip

Note that if $k=1$, then any tree is vertex-color-avoiding connected; accordingly, in this case Algorithm~\ref{alg:vertex} does not select any edges in Phase~2, and thus the output has $n-1$ edges, which by Theorem~\ref{thm:vertex} is an optimal solution. Then, it is natural to ask whether a more technical analysis can establish a better approximation factor for Algorithm~\ref{alg:vertex} if $k \ge 2$. The following examples show that this is not the case. More precisely, for $k \ge 2$, we present some vertex-color-avoiding connected graphs for which the output of Algorithm~\ref{alg:vertex} can have exactly $2n-3$ edges, while the optimal solution has $n-1$ edges if $k=2$ and has $n$ edges otherwise. 
 
First, assume $k=2$ and let us define the vertex-colored graph $G=(V,E)$ as follows. Let $C \colonequals \{ 0, 1 \}$ be the color set and let
\[ V \colonequals \{ v_0, \ldots, v_{n-1} \} \text{,} \]
and for any $j \in \{ 0, 1, \ldots, n-1 \}$, let $v_j$ be of color $j \pmod{k}$. Let
\[ E' \colonequals \big\{ v_j v_{j+2} \bigm| j \in \{ 0, 1, \ldots, n-3 \} \big\} \cup \{ v_0 v_1 \} \]
and
\[ E'' \colonequals \big\{ v_j v_{j+1} \bigm| j \in \{ 0, 1, \ldots, n-2 \} \big\} \cup \big\{ v_j v_{j+2} \bigm| j \in \{ 0, 1, \ldots, n-3 \} \big\} \text{,} \]
and let $E = E' \cup E''$. For an example, see Figure~\ref{fig:vertex-algo}.

By Theorem~\ref{thm:vertex}, the subgraph $(V, E')$ is an optimal solution with $n-1$ edges. However, the output of the algorithm can also be the subgraph $(V, E'')$: the algorithm can select the edges of $\big\{ v_j v_{j+1} \bigm| j \in \{ 0, 1, \ldots, n-2 \} \big\}$ in Phase~1 (these edges obviously form a Hamiltonian path, and thus a spanning tree of $G$), then it can select all the edges of $\big\{ v_j v_{j+2} \bigm| j \in \{ 0, 1, \ldots, n-3 \}$ in Phase~2 (and all the so far selected edges clearly form a vertex-color-avoiding connected graph). Clearly, this output has $2n-3$ edges.
 
Now assume $k \ge 3$ and let us define the vertex-colored graph $G=(V,E)$ as follows. Let
\[ C \colonequals \{ 0, 1, \ldots, k-1 \} \]
be the color set and let
\[ V \colonequals \{ v_0, \ldots, v_{n-1} \} \text{,} \]
and for any $j \in \{ 0, 1, \ldots, n-1 \}$, let $v_j$ be of color $j \pmod{k}$. Let
\[ E' \colonequals \big\{ v_j v_{j+k} \bigm| j \in \{ 0, 1, \ldots, n-k-1 \} \big\} \cup \big\{ v_j v_{\ell} \bigm| j \in \{ n-k, \ldots, n-1 \} \text{ and } \ell = j+1 \!\!\! \pmod{k} \big\} \]
and
\[ E'' \colonequals \big\{ v_j v_{j+1} \bigm| j \in \{ 0, 1, \ldots, n-2 \} \big\} \cup \big\{ v_j v_{j+2} \bigm| j \in \{ 0, 1, \ldots, n-3 \} \big\} \text{,} \]
and let $E = E' \cup E''$. For an example, see Figure~\ref{fig:vertex-algo}.

By Theorem~\ref{thm:vertex}, the subgraph $(V, E')$ is (such a Hamiltonian cycle in $G$, along which the vertices of the same color are consecutive, thus it is) an optimal solution with $n$ edges. However, the output of Algorithm~\ref{alg:vertex} can also be the subgraph $(V, E'')$: the algorithm can select the edges of $\big\{ v_j v_{j+1} \bigm| j \in \{ 0, 1, \ldots, n-2 \} \big\}$ in Phase~1 (these edges obviously form a Hamiltonian path, and thus a spanning tree of $G$), then it can select all the edges of $\big\{ v_j v_{j+2} \bigm| j \in \{ 0, 1, \ldots, n-3 \}$ in Phase~2 (and all the so far selected edges clearly form a vertex-color-avoiding connected graph). Clearly, this output has $2n-3$ edges.

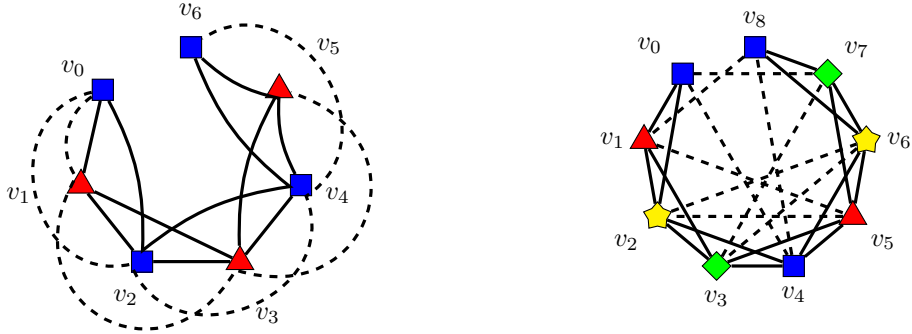
\begin{figure}[H]
\centering
\begin{tikzpicture}[scale=1.5]
 \tikzstyle{vertex_red}=[draw, shape=regular polygon, regular polygon sides=3, fill=red, minimum size=12pt, inner sep=0]
 \tikzstyle{vertex_blue}=[draw, shape=rectangle, fill=blue, minimum size=8pt, inner sep=0]
 \tikzstyle{vertex_green}=[draw, shape=diamond, fill=green, minimum size=11pt, inner sep=0]
 \tikzstyle{vertex_yellow}=[draw, shape=star, fill=yellow, minimum size=11pt, inner sep=0]
 
 \begin{scope}[rotate={90+360/7}]
 \node[vertex_blue] (a0) at (0*360/7:1) [label={[shift={($(0,-0.4)+(90+360/7:0.5)$)}] $v_0$}] {};
 \node[vertex_red] (a1) at (1*360/7:1) [label={[shift={($(0,-0.4)+(90+2*360/7:0.85)$)}] $v_1$}] {};
 \node[vertex_blue] (a2) at (2*360/7:1) [label={[shift={($(0,-0.4)+(90+3*360/7:0.5)$)}] $v_2$}] {};
 \node[vertex_red] (a3) at (3*360/7:1) [label={[shift={($(0,-0.4)+(90+4*360/7:0.85)$)}] $v_3$}] {};
 \node[vertex_blue] (a4) at (4*360/7:1) [label={[shift={($(0,-0.4)+(90+5*360/7:0.5)$)}] $v_4$}] {};
 \node[vertex_red] (a5) at (5*360/7:1) [label={[shift={($(0,-0.4)+(90+6*360/7:0.85)$)}] $v_5$}] {};
 \node[vertex_blue] (a6) at (6*360/7:1) [label={[shift={($(0,-0.4)+(90+7*360/7:0.5)$)}] $v_6$}] {};
 \draw[very thick] (a0) -- (a1) -- (a2) -- (a3) -- (a4) to [bend left=15] (a5) to [bend left=15] (a6);
 \draw[very thick] (a1) -- (a3) to [bend left=15] (a5);
 \draw[very thick] (a0) to [bend left=15] (a2) to [bend left=15] (a4) to [bend left=15] (a6);
 \draw[very thick,dashed] (a0) to [bend right=90, looseness=1.5] (a2) to [bend right=90, looseness=1.5] (a4) to [bend right=90, looseness=1.5] (a6);
 \draw[very thick,dashed] (a0) to [bend right=45, looseness=1] (a1) to [bend right=90, looseness=2] (a3) to [bend right=90, looseness=2] (a5);
 \end{scope}

 \begin{scope}[shift={(5,0)}, rotate={90+360/9}]
 \node[vertex_blue] (a0) at (0*360/9:1) [label={above left:$v_0$}] {};
 \node[vertex_red] (a1) at (1*360/9:1) [label={left:$v_1$}] {};
 \node[vertex_yellow] (a2) at (2*360/9:1) [label={below left:$v_2$}] {};
 \node[vertex_green] (a3) at (3*360/9:1) [label={below:$v_3$}] {};
 \node[vertex_blue] (a4) at (4*360/9:1) [label={below:$v_4$}] {};
 \node[vertex_red] (a5) at (5*360/9:1) [label={below right:$v_5$}] {};
 \node[vertex_yellow] (a6) at (6*360/9:1) [label={right:$v_6$}] {};
 \node[vertex_green] (a7) at (7*360/9:1) [label={above right:$v_7$}] {};
 \node[vertex_blue] (a8) at (8*360/9:1) [label={above:$v_8$}] {};
 \draw[very thick] (a0) -- (a1) -- (a2) -- (a3) -- (a4) -- (a5) -- (a6) -- (a7) -- (a8);
 \draw[very thick] (a1) -- (a3) -- (a5) -- (a7);
 \draw[very thick] (a0) -- (a2) -- (a4) -- (a6) -- (a8);
 \draw[very thick,dashed] (a0) -- (a4) -- (a8) -- (a1) -- (a5) -- (a2) -- (a6) -- (a3) -- (a7) -- (a0);
 \end{scope}
\end{tikzpicture}
\caption{On the left, a vertex-color-avoiding connected graph colored with $k=2$ colors and on $n = 7$ vertices, which contains a vertex-color-avoiding connected spanning subgraph with $n-1 = 6$ edges --- for example, the dashed edges form such a subgraph --- and a vertex-color-avoiding connected spanning subgraph with $2n-3 = 11$ edges --- that subgraph is spanned by the remaining (i.e.\ solid) edges. On the right, a vertex-color-avoiding connected graph colored with $k=4$ colors and on $n = 9$ vertices, which contains a vertex-color-avoiding connected spanning subgraph with $n = 9$ edges --- for example, the dashed edges form such a subgraph --- and a vertex-color-avoiding connected spanning subgraph with $2n-3 = 15$ edges --- that subgraph is spanned by the remaining (i.e.\ solid) edges. In both cases, the output of Algorithm~\ref{alg:vertex} can be the subgraph of the solid edges.}
\label{fig:vertex-algo}
\end{figure}

\begin{remark} \label{remark:vertex}
 If $G=(V,E)$ is a vertex-color-avoiding connected graph on $n$ vertices colored with exactly $k=2$ colors, then we can always find a vertex-color-avoiding connected spanning subgraph with $n-1$ edges --- which, by Theorem~\ref{thm:vertex} is then an optimal solution --- in polynomial time as follows. First, note that since $G$ is vertex-color-avoiding connected and $k=2$, the subgraph spanned by any of the two colors $c \in \{ 1, 2 \}$ must be connected, and thus it must contain a spanning tree $T_c$. Let $e$ be an edge whose both endvertices are of different colors; note that by the definition of vertex-color-avoiding connectivity, $G$ must be connected, and thus such an edge $e$ must exist. Then the edges of $E(T_1) \cup E(T_2) \cup \{ e \}$ form a vertex-color-avoiding spanning subgraph of $G$ with $n-1$ edges.
\end{remark}

Let us consider those vertex-color-avoiding connected graphs, which do not contain any vertex-color-avoiding connected spanning subgraph except themselves. By the proof of Theorem~\ref{thm:vertexappr}, such a graph $G=(V,E)$ on $n$ vertices has at most $2n-3$ edges. In addition, by the discussion after the proof of Theorem~\ref{thm:vertexappr} and by Remark~\ref{remark:vertex}, if $k \le 2$, then such a graph $G$ is a tree. In the following, we show that if $k = 3$, then the upper bound $|E| \le 2n-3$ is tight. To see this, let $G = (V,E)$ be defined as follows. Let
\[ C \colonequals \{ 0, 1, 2 \} \]
be the color set and let
\[ V \colonequals \{ v_0, \ldots, v_{n-1} \} \text{,} \]
and for any $j \in \{ 0, 1, \ldots, n-1 \}$, let $v_j$ be of color $j \pmod{k}$, and let
\[ E \colonequals \big\{ v_j v_{j+1} \bigm| j \in \{ 0, 1, \ldots, n-2 \} \big\} \cup \big\{ v_j v_{j+2} \bigm| j \in \{ 0, 1, \ldots, n-3 \} \big\} \text{.} \]
For an example, see Figure~\ref{fig:vertex-sharp}.

\begin{figure}[H]
\centering
\begin{tikzpicture}[scale=1.2]
 \tikzstyle{vertex_red}=[draw, shape=regular polygon, regular polygon sides=3, fill=red, minimum size=12pt, inner sep=0]
 \tikzstyle{vertex_blue}=[draw, shape=rectangle, fill=blue, minimum size=8pt, inner sep=0]
 \tikzstyle{vertex_green}=[draw, shape=diamond, fill=green, minimum size=11pt, inner sep=0]
 \tikzstyle{vertex_yellow}=[draw, shape=star, fill=yellow, minimum size=11pt, inner sep=0]
\begin{scope}[rotate={135}]
 \node[vertex_blue] (a1) at (0:1) [label={above left:$v_0$}] {};
 \node[vertex_red] (a2) at (45:1) [label={left:$v_1$}] {};
 \node[vertex_yellow] (a3) at (90:1) [label={below left:$v_2$}] {};
 \node[vertex_blue] (a4) at (135:1) [label={below:$v_3$}] {};
 \node[vertex_red] (a5) at (180:1) [label={below right:$v_4$}] {};
 \node[vertex_yellow] (a6) at (225:1) [label={right:$v_5$}] {};
 \node[vertex_blue] (a7) at (270:1) [label={above right:$v_6$}] {};
 \node[vertex_red] (a8) at (315:1) [label={above:$v_7$}] {};
 \draw[very thick] (a1) -- (a2) -- (a3) -- (a4) -- (a5) -- (a6) -- (a7) -- (a8);
 \draw[very thick] (a1) -- (a3) -- (a5) -- (a7);
 \draw[very thick] (a2) -- (a4) -- (a6) -- (a8);
\end{scope}
\end{tikzpicture}
\caption{A vertex-color-avoiding connected graph on $n = 8$ vertices colored with exactly $k = 3$ colors having $2n-3 = 13$ edges and having the property that none of the edges can be removed such that the graph remains vertex-color-avoiding connected.}
\label{fig:vertex-sharp}
\end{figure}
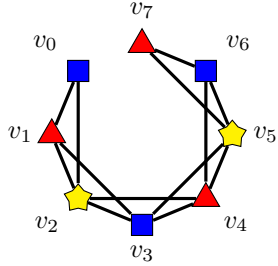

Therefore, we obtained the following.

\begin{corollary}\label{corollary:vertex}
 Let $G=(V,E)$ be a vertex-color-avoiding connected graph on $n$ vertices such that $G-e$ is not vertex-color-avoiding connected for any $e \in E$. Then $|E| \le 2n-3$ and if the vertices are colored with exactly $k=3$ colors, then this upper bound is sharp. Moreover, if $k=1$ or $k=2$, then $|E| = n-1$.
\end{corollary}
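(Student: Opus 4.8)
The plan is to establish the three claims---the bound $|E| \le 2n-3$, its sharpness for $k=3$, and the equality $|E|=n-1$ for $k \le 2$---separately, reusing the machinery already in place. The unifying tool will be a simple monotonicity observation: if $H$ is a vertex-color-avoiding connected spanning subgraph of $G$ and $e \in E(G) - E(H)$, then $G-e$ is still vertex-color-avoiding connected, because for every color $c$ the graph $(G-e)_{\overline c}$ contains $H_{\overline c}$ as a spanning subgraph and $H_{\overline c}$ is connected. Consequently, a graph possessing any \emph{proper} vertex-color-avoiding connected spanning subgraph cannot be edge-minimal.

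For the upper bound, I would combine this observation with the proof of Theorem~\ref{thm:vertexappr}, which exhibits, for every vertex-color-avoiding connected graph, a vertex-color-avoiding connected spanning subgraph with at most $2n-3$ edges (the output of Algorithm~\ref{alg:vertex}). Under the hypothesis that $G-e$ fails to be vertex-color-avoiding connected for every $e$, the graph $G$ has no proper vertex-color-avoiding connected spanning subgraph, so that at-most-$(2n-3)$-edge subgraph must equal $G$ itself, giving $|E| \le 2n-3$.

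For sharpness at $k=3$, I would verify the construction preceding the statement, namely the square of the path on $v_0,\dots,v_{n-1}$ with the cyclic coloring $c(v_j)=j \bmod 3$; it has $(n-1)+(n-2)=2n-3$ edges. First I would check vertex-color-avoiding connectivity by computing, for each color $c$, that $G_{\overline c}$ is a single path through the surviving vertices, alternating the retained length-$1$ and length-$2$ edges, hence connected. Then, for edge-minimality, the key point is that every edge is a \emph{bridge} of exactly one $G_{\overline c}$: a length-$1$ edge $v_j v_{j+1}$ survives only for the color $c \notin \{c(v_j),c(v_{j+1})\}$, and a length-$2$ edge $v_j v_{j+2}$ survives only for $c=c(v_{j+1})$; in either case $G_{\overline c}$ is a path, so the edge is a bridge and its deletion disconnects $G_{\overline c}$, destroying vertex-color-avoiding connectivity.

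Finally, for $k \le 2$ I would show that edge-minimality forces $G$ to be a spanning tree. If $k=1$, every tree is vertex-color-avoiding connected and $G$ (being connected) contains a spanning tree, which by the monotonicity observation must equal $G$; if $k=2$, Remark~\ref{remark:vertex} supplies a vertex-color-avoiding connected spanning subgraph with $n-1$ edges, and the same observation forces $G$ to coincide with it. Either way $|E|=n-1$. The step I expect to be most delicate is the $k=3$ edge-minimality argument: confirming that each $G_{\overline c}$ is a genuine path---with the endpoints $v_0$ and $v_{n-1}$ and the residue of $n$ modulo $3$ handled correctly---is what guarantees there are no redundant edges and that every edge is truly a bridge.
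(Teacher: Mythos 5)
Your proposal is correct and follows essentially the same route as the paper: the upper bound is read off from the proof of Theorem~\ref{thm:vertexappr} via the observation that an edge-minimal graph can have no proper vertex-color-avoiding connected spanning subgraph, the $k\le 2$ case is handled by Remark~\ref{remark:vertex} (and spanning trees for $k=1$), and sharpness for $k=3$ uses the same squared-path construction with the coloring $c(v_j)=j\bmod 3$. Your explicit verification that each $G_{\overline{c}}$ is a path and that every edge survives in exactly one $G_{\overline{c}}$ (where it is a bridge) correctly fills in the details the paper leaves to the reader.
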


By Corollary~\ref{corollary:vertex}, a simple greedy algorithm (i.e.\ an analogous version of the one described in Remark~\ref{remark:simple_alg}) also performs in polynomial time and with the same approximation factor as Algorithm~\ref{alg:vertex}.

\subsection{Approximation\hspace{1pt} algorithm\hspace{1pt} for\hspace{1pt} internally\hspace{1pt} vertex-color-avoiding\hspace{1pt} connected graphs}

Now we give a polynomial-time approximation algorithm for finding an internally vertex-color-avoiding connected spanning subgraph with minimum number of edges. But before that, we give a lower bound on the number of edges in an internally vertex-color-avoiding connected graph.

\begin{theorem} \label{thm:ivertex}
 Let $G$ be an internally vertex-color-avoiding connected graph on $n$ vertices colored with exactly $k$ colors. Then
 \[ \big| E(G) \big| \ge \begin{cases}
                          \displaystyle \binom{n}{2} & \text{if $k = 1$,} \\[12pt]
                          \left\lceil \displaystyle \frac{2k-1}{2k-2}n - \frac{k}{k-1} \right\rceil & \text{if $k \ge 2$,}
                         \end{cases} \]
 and these lower bounds are sharp.
\end{theorem}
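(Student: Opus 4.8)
The plan is to reduce the definition to two clean necessary conditions and then count edges by a double-counting argument over the colors. Write $V_c$ for the set of vertices of color $c$ and $n_c = |V_c|$, so that $\sum_c n_c = n$. First I would establish that in any internally vertex-color-avoiding connected graph with $k \ge 2$ colors, for every color $c$ the following hold: (1) the graph $G - V_c$ obtained by deleting all vertices of color $c$ is connected, and (2) every vertex of color $c$ has at least one neighbor of a color different from $c$. Both follow directly from the definition: for (1), any two vertices $u,v \notin V_c$ admit a $u$--$v$ path with no internal vertex of color $c$, and since its endpoints are also not of color $c$, the whole path lies in $G - V_c$; for (2), taking any $v \notin V_c$ (which exists because $k \ge 2$) and an internally $c$-avoiding $u$--$v$ path for a color-$c$ vertex $u$, the first edge of this path leaves $u$ to a vertex not of color $c$. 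I only need these as necessary conditions for the lower bound.

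The case $k=1$ is separate and immediate: with a single color $c$, an internally $c$-avoiding $u$--$v$ path has no internal vertices at all, hence must be the single edge $uv$; therefore $G = K_n$ and $\big|E(G)\big| = \binom{n}{2}$.

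For $k \ge 2$, let $p$ and $b$ denote the numbers of monochromatic and bichromatic edges, so $\big|E(G)\big| = p + b$. Condition~(2) says the bichromatic edges cover every vertex, i.e.\ form a subgraph with no isolated vertex, whence $b \ge \lceil n/2 \rceil \ge n/2$. Condition~(1) I would exploit by summing the number of edges of $G - V_c$ over all $k$ colors: a monochromatic edge survives in $G - V_c$ for the $k-1$ colors $c$ distinct from its own, a bichromatic edge survives for the $k-2$ colors avoiding both of its endpoints, and each $G - V_c$ is connected on $n - n_c \ge 1$ vertices, hence has at least $n - n_c - 1$ edges. This yields
\[ (k-1)p + (k-2)b = \sum_{c} \big| E(G - V_c) \big| \ge \sum_c (n - n_c - 1) = (k-1)n - k. \]
Rewriting the left-hand side as $(k-1)(p+b) - b = (k-1)\big|E(G)\big| - b$ and inserting $b \ge n/2$ gives $(k-1)\big|E(G)\big| \ge (k-1)n - k + n/2$, i.e.\ $\big|E(G)\big| \ge \frac{(2k-1)n - 2k}{2(k-1)} = \frac{2k-1}{2k-2}\,n - \frac{k}{k-1}$; integrality then produces the claimed ceiling.

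Finally, for sharpness I would construct graphs meeting both inequalities with equality: the bichromatic edges should form a (near-)perfect matching so that $b = \lceil n/2 \rceil$, and each $G - V_c$ should be a spanning tree of the non-$c$ vertices so that condition~(1) is tight. A convenient realization places the vertices on a path $v_0, \ldots, v_{n-1}$ colored by residue modulo a suitable value and adds short chords, generalizing the ``ladder'' that works for $k=2$ (two color classes each spanned by a path and joined by a perfect matching); one then verifies conditions~(1) and~(2) and that exactly $k$ colors are used. The main obstacle is precisely this sharpness part: one must make the matching bound and all $k$ spanning-tree constraints tight simultaneously, and handle the parity and divisibility cases hidden in the ceiling so that the edge count is exactly $\big\lceil \frac{2k-1}{2k-2}\,n - \frac{k}{k-1}\big\rceil$. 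The lower bound itself is the easy half once conditions~(1)--(2) have been isolated.
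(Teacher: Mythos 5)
Your lower bound argument is correct, and it runs close to the paper's: the first inequality (summing $|E(G - V_c)| \ge n - n_c - 1$ over the $k$ colors, with monochromatic edges counted $k-1$ times and bichromatic ones $k-2$ times) is exactly the paper's first counting step. Where you diverge is in the second ingredient: the paper shows that for each color $c$ the graph remains connected after deleting all edges with \emph{both} endpoints of color $c$, sums the resulting bounds over colors, and obtains $k|E| \ge kn - k + x$ (with $x$ the number of monochromatic edges); you instead observe that every vertex has a differently-colored neighbor, so the bichromatic edges form a subgraph without isolated vertices and $b \ge \lceil n/2 \rceil$. Both second inequalities, combined with the shared first one, yield the identical final bound $(2k-2)|E| \ge (2k-1)n - 2k$, so this is a legitimate and arguably slightly more elementary variant. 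Your $k=1$ case is also the paper's argument verbatim.

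The genuine gap is the sharpness claim, which is half of the theorem statement and which you explicitly leave unproved. The paper devotes the bulk of its proof to an explicit family of extremal graphs: a ``ladder-like'' construction parametrized by writing $n = (2k-2)m + \ell + 3$ with $0 \le \ell \le 2k-3$, with two separate case descriptions for $\ell = 0$ and $\ell \ne 0$, precisely to absorb the divisibility issues hidden in the ceiling. Your guiding intuition for the construction is also not quite what the extremal examples look like: in the paper's graphs the bichromatic edges do achieve $b = \lceil n/2 \rceil$, but they are not a near-perfect matching (two of them may share a vertex when $n$ is odd), and the subgraphs $G - V_c$ are \emph{not} all spanning trees --- some contain cycles. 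Neither of your two inequalities is individually tight on these examples; only the final ceiling is attained. So ``make both inequalities tight simultaneously'' is not the right target, and a construction designed around that goal would not exist for general $n$ and $k$. Until you exhibit, for every $n$ and every $k \ge 2$ (in the admissible range), an internally vertex-color-avoiding connected graph with exactly $\bigl\lceil \frac{2k-1}{2k-2}n - \frac{k}{k-1} \bigr\rceil$ edges and verify its internal color-avoiding connectivity, the theorem as stated is not proved.
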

\begin{proof}
 First, assume $k=1$. It is not difficult to see that in this case, only the complete graphs are internally color-avoiding connected. Thus the minimum number of edges of an internally color-avoiding connected graph on $n$ vertices, where all the vertices are colored with the same color, is indeed $\binom{n}{2}$.

 \medskip
 
 Now assume $k \ge 2$ and let $G = (V,E)$ be an internally vertex-color-avoiding connected graph on $n$ vertices colored with a color set $C$, where $|C| = k \ge 2$ and each color in $C$ is used at least once. Let $x$ be the number of edges whose endpoints are of the same color, and let $n_c$ be the number of vertices of color $c$ for any $c \in C$. Now we give two lower bounds on the number of edges of $G$ using the variable $x$, and then we combine them to eliminate $x$.

 Let us prove the first lower bound, namely
 \[ (k-2) \cdot |E| \ge (k-1)n - k - x \text{.} \]
 Since $G$ is internally vertex-color-avoiding connected, the graph $G_{\overline{c}}$ is connected for any $c \in C$, so it has at least $n - n_c - 1$ edges. Summing this up for all the colors, we count at least
 \[ \sum_{c \in C} (n - n_c - 1) = kn - n - k = (k-1)n - k \]
 edges, where the edges whose endpoints have the same color are counted $k-1$ times, and the other edges (i.e.\ those ones whose endpoints are not of the same color) are counted $k-2$ times. This implies our first lower bound.

 Now let us prove the second lower bound, namely
 \[ k \cdot |E| \ge kn - k + x \text{.} \]
 Since $G$ is internally vertex-$c$-avoiding connected for any $c \in C$ and there are $k \ge 2$ colors, removing the edges whose both endpoints are of color $c$ leaves the graph connected, so the remaining graph has at least $n-1$ edges. Summing up these remaining edges for all the colors, we count $k(n-1)$ edges, where the edges whose endpoints have different colors are counted $k$ times, and the other edges (i.e.\ those ones whose endpoints are of the same color) are counted $k-1$ times. This implies our second lower bound.

 Adding these two inequalities together, we obtain
 \[ (2k-2) \cdot |E| \ge (2k-1)n - 2k \text{,} \]
 thus
 \[ |E| \ge \left\lceil \frac{2k-1}{2k-2}n-\frac{k}{k-1} \right\rceil \text{.} \]
 
 To show that this lower bound is tight, we construct an internally vertex-color-avoiding connected graph on $n$ vertices colored with exactly $k \ge 2$ colors that has ${\footnotesize \Big\lceil} \frac{2k-1}{2k-2}n - \frac{k}{k-1} {\footnotesize \Big\rceil}$ edges. Clearly, there exist (unique) integers $m, \ell$ such that $n = (2k-2)m + \ell + 3$ and $0 \le \ell \le 2k-3$.

 First, consider the case when $\ell = 0$. Let $G = (V,E)$ and $c \colon V \to \{ 1, \ldots, k \}$ be defined as follows. Let
 \[ V = \big\{ v_{i,j} \bigm| i \in \{ 1, \ldots, 2k-2 \} \text{ and } j \in \{ 1, \ldots, m \} \big\} \cup \{ v_{1, m+1}, \, v_{1, m+2}, \, v_{2k-2, m+1} \} \text{,} \]
 let
 \begin{multline*}
  E = \big\{ v_{1, j} v_{1, j+1} \bigm| j \in \{ 1, \ldots, m+1 \} \big\} \cup \big\{ v_{2k-2, j} v_{2k-2, j+1} \bigm| j \in \{ 1, \ldots, m \} \big\} \\
  \cup \big\{ v_{i, j} v_{i+1, j} \bigm| i \in \{ 1, \ldots, 2k-3 \} \text{ and } j \in \{ 1, \ldots, m \} \big\} \cup \big\{ v_{1, m+1} \, v_{2k-2, m+1}, \; v_{1, m+2} \, v_{2k-2, m+1} \big\} \text{,}
 \end{multline*}
 and let
 \[ c \colon V \to \{ 1, \ldots, k \} \qquad v \mapsto \begin{cases}
1 & \text{if $v = v_{1, j}$ for some $j \in \{ 1, \ldots, m+2 \}$,} \\
\left\lfloor \frac{i}{2} \right\rfloor + 1 & \text{if $v = v_{i, j}$ for some $i \in \{ 2, 3, \ldots, 2k-3 \}$ and $j \in \{ 1, \ldots, m \}$,} \\
k & \text{if $v = v_{2k-2, j}$ for some $j \in \{ 1, \ldots, m+1 \}$.}
\end{cases} \]

 Now consider the case when $\ell \ne 0$. Let $G = (V,E)$ and $c \colon V \to \{ 1, \ldots, k \}$ be defined as follows. Let
 \begin{multline*}
  V = \big\{ v_{i,j} \bigm| i \in \{ 1, \ldots, 2k-2 \} \text{ and } j \in \{ 1, \ldots, m \} \big\} \cup \big\{ v_{i, m+1} \bigm| i \in \{ 1, \ldots, \ell \} \big\} \\
  \cup \{ v_{1, m+2}, \, v_{2k-2, m+1}, \, v_{2k-2, m+2} \} \text{,}
 \end{multline*}
 let
 \begin{multline*}
  E = \big\{ v_{1, j} v_{1, j+1} \bigm| j \in \{ 1, \ldots, m+1 \} \big\} \cup \big\{ v_{2k-2, j} v_{2k-2, j+1} \bigm| j \in \{ 1, \ldots, m+1 \} \big\} \\
  \cup \big\{ v_{i, j} v_{i+1, j} \bigm| i \in \{ 1, \ldots, 2k-3 \} \text{ and } j \in \{ 1, \ldots, m \} \big\} \cup \big\{ v_{i, m+1} v_{i+1, m+1} \bigm| i \in \{ 1, \ldots, \ell-1 \} \big\} \\
  \cup \big\{ v_{\ell, m+1} \, v_{2k-2, m+1}, \; v_{1, m+2} \, v_{2k-2, m+2} \big\} \text{,}
 \end{multline*}
 and let
 \[ c \colon V \to \{ 1, \ldots, k \} \qquad v \mapsto \begin{cases}
1 & \text{if $v = v_{1, j}$ for some $j \in \{ 1, \ldots, m+2 \}$,} \\
\left\lfloor \frac{i}{2} \right\rfloor + 1 & \text{if $v = v_{i, j}$ for some $i \in \{ 2, 3, \ldots, 2k-3 \}$ and $j \in \{ 1, \ldots, m+1 \}$,} \\
k & \text{if $v = v_{2k-2, j}$ for some $j \in \{ 1, \ldots, m+2 \}$.}
\end{cases} \]
 For some examples, see Figure~\ref{fig:vert-constr}. It is not difficult to see that these graphs are internally vertex-color-avoiding connected and have ${\footnotesize \Big\lceil} \frac{2k-1}{2k-2}n - \frac{k}{k-1} {\footnotesize \Big\rceil}$ edges.
\end{proof}

\begin{figure}[H]
\centering
\begin{tikzpicture}[scale=0.78]
 \tikzstyle{vertex_red}=[draw, shape=regular polygon, regular polygon sides=3, fill=red, minimum size=12pt, inner sep=0]
 \tikzstyle{vertex_blue}=[draw, shape=rectangle, fill=blue, minimum size=8pt, inner sep=0]
 \tikzstyle{vertex_green}=[draw, shape=diamond, fill=green, minimum size=11pt, inner sep=0]
 \tikzstyle{vertex_yellow}=[draw, shape=star, fill=yellow, minimum size=11pt, inner sep=0]

 \footnotesize
 
 \node[vertex_red] (v11) at (0,5) [label={[xshift=0pt, yshift=-1pt] $v_{1,1}$}] {};
 \node[vertex_red] (v12) at (1,5) [label={[xshift=0pt, yshift=-1pt] $v_{1,2}$}] {};
 \node[vertex_red] (v13) at (2,5) [label={[xshift=0pt, yshift=-1pt] $v_{1,3}$}] {};
 \node[vertex_blue] (v21) at (0,4) [label={[xshift=-14pt, yshift=-10pt] $v_{2,1}$}] {};
 \node[vertex_blue] (v31) at (0,3) [label={[xshift=-14pt, yshift=-10pt] $v_{3,1}$}] {};
 \node[vertex_green] (v41) at (0,2) [label={[xshift=-14pt, yshift=-10pt] $v_{4,1}$}] {};
 \node[vertex_green] (v51) at (0,1) [label={[xshift=-14pt, yshift=-10pt] $v_{5,1}$}] {};
 \node[vertex_yellow] (v61) at (0,0) [label=below:$v_{6,1}$] {};
 \node[vertex_yellow] (v62) at (1,0) [label=below:$v_{6,2}$] {};

 \draw[very thick] (v11) -- (v12) -- (v13);
 \draw[very thick] (v61) -- (v62);
 \draw[very thick] (v11) -- (v21) -- (v31) -- (v41) -- (v51) -- (v61);
 \draw[very thick] (v12) -- (v62);
 \draw[very thick] (v13) -- (v62);

 \begin{scope}[shift={(3.5,0)}]
 \node[vertex_red] (v11) at (0,5) [label={[xshift=0pt, yshift=-1pt] $v_{1,1}$}] {};
 \node[vertex_red] (v12) at (1,5) [label={[xshift=0pt, yshift=-1pt] $v_{1,2}$}] {};
 \node[vertex_red] (v13) at (2,5) [label={[xshift=0pt, yshift=-1pt] $v_{1,3}$}] {};
 \node[vertex_blue] (v21) at (0,4) [label={[xshift=-14pt, yshift=-10pt] $v_{2,1}$}] {};
 \node[vertex_blue] (v31) at (0,3) [label={[xshift=-14pt, yshift=-10pt] $v_{3,1}$}] {};
 \node[vertex_green] (v41) at (0,2) [label={[xshift=-14pt, yshift=-10pt] $v_{4,1}$}] {};
 \node[vertex_green] (v51) at (0,1) [label={[xshift=-14pt, yshift=-10pt] $v_{5,1}$}] {};
 \node[vertex_yellow] (v61) at (0,0) [label=below:$v_{6,1}$] {};
 \node[vertex_yellow] (v62) at (1,0) [label=below:$v_{6,2}$] {};
 \node[vertex_yellow] (v63) at (2,0) [label=below:$v_{6,3}$] {};

 \draw[very thick] (v11) -- (v12) -- (v13);
 \draw[very thick] (v61) -- (v62) -- (v63);
 \draw[very thick] (v11) -- (v21) -- (v31) -- (v41) -- (v51) -- (v61);
 \draw[very thick] (v12) -- (v62);
 \draw[very thick] (v13) -- (v63);
 \end{scope}

 \begin{scope}[shift={(7,0)}]
 \node[vertex_red] (v11) at (0,5) [label={[xshift=0pt, yshift=-1pt] $v_{1,1}$}] {};
 \node[vertex_red] (v12) at (1,5) [label={[xshift=0pt, yshift=-1pt] $v_{1,2}$}] {};
 \node[vertex_red] (v13) at (2,5) [label={[xshift=0pt, yshift=-1pt] $v_{1,3}$}] {};
 \node[vertex_blue] (v21) at (0,4) [label={[xshift=-14pt, yshift=-10pt] $v_{2,1}$}] {};
 \node[vertex_blue] (v22) at (1,4) [label={[xshift=13pt, yshift=-10pt] $v_{2,2}$}] {};
 \node[vertex_blue] (v31) at (0,3) [label={[xshift=-14pt, yshift=-10pt] $v_{3,1}$}] {};
 \node[vertex_green] (v41) at (0,2) [label={[xshift=-14pt, yshift=-10pt] $v_{4,1}$}] {};
 \node[vertex_green] (v51) at (0,1) [label={[xshift=-14pt, yshift=-10pt] $v_{5,1}$}] {};
 \node[vertex_yellow] (v61) at (0,0) [label=below:$v_{6,1}$] {};
 \node[vertex_yellow] (v62) at (1,0) [label=below:$v_{6,2}$] {};
 \node[vertex_yellow] (v63) at (2,0) [label=below:$v_{6,3}$] {};
 
 \draw[very thick] (v11) -- (v12) -- (v13);
 \draw[very thick] (v61) -- (v62) -- (v63);
 \draw[very thick] (v11) -- (v21) -- (v31) -- (v41) -- (v51) -- (v61);
 \draw[very thick] (v12) -- (v22) -- (v62);
 \draw[very thick] (v13) -- (v63);
 \end{scope}
 
 \begin{scope}[shift={(10.5,0)}]
 \node[vertex_red] (v11) at (0,5) [label={[xshift=0pt, yshift=-1pt] $v_{1,1}$}] {};
 \node[vertex_red] (v12) at (1,5) [label={[xshift=0pt, yshift=-1pt] $v_{1,2}$}] {};
 \node[vertex_red] (v13) at (2,5) [label={[xshift=0pt, yshift=-1pt] $v_{1,3}$}] {};
 \node[vertex_blue] (v21) at (0,4) [label={[xshift=-14pt, yshift=-10pt] $v_{2,1}$}] {};
 \node[vertex_blue] (v22) at (1,4) [label={[xshift=13pt, yshift=-10pt] $v_{2,2}$}] {};
 \node[vertex_blue] (v31) at (0,3) [label={[xshift=-14pt, yshift=-10pt] $v_{3,1}$}] {};
 \node[vertex_blue] (v32) at (1,3) [label={[xshift=13pt, yshift=-10pt] $v_{3,2}$}] {};
 \node[vertex_green] (v41) at (0,2) [label={[xshift=-14pt, yshift=-10pt] $v_{4,1}$}] {};
 \node[vertex_green] (v51) at (0,1) [label={[xshift=-14pt, yshift=-10pt] $v_{5,1}$}] {};
 \node[vertex_yellow] (v61) at (0,0) [label=below:$v_{6,1}$] {};
 \node[vertex_yellow] (v62) at (1,0) [label=below:$v_{6,2}$] {};
 \node[vertex_yellow] (v63) at (2,0) [label=below:$v_{6,3}$] {};
 
 \draw[very thick] (v11) -- (v12) -- (v13);
 \draw[very thick] (v61) -- (v62) -- (v63);
 \draw[very thick] (v11) -- (v21) -- (v31) -- (v41) -- (v51) -- (v61);
 \draw[very thick] (v12) -- (v22) -- (v32) -- (v62);
 \draw[very thick] (v13) -- (v63);
 \end{scope}
 
 \begin{scope}[shift={(14,0)}]
 \node[vertex_red] (v11) at (0,5) [label={[xshift=0pt, yshift=-1pt] $v_{1,1}$}] {};
 \node[vertex_red] (v12) at (1,5) [label={[xshift=0pt, yshift=-1pt] $v_{1,2}$}] {};
 \node[vertex_red] (v13) at (2,5) [label={[xshift=0pt, yshift=-1pt] $v_{1,3}$}] {};
 \node[vertex_blue] (v21) at (0,4) [label={[xshift=-14pt, yshift=-10pt] $v_{2,1}$}] {};
 \node[vertex_blue] (v22) at (1,4) [label={[xshift=13pt, yshift=-10pt] $v_{2,2}$}] {};
 \node[vertex_blue] (v31) at (0,3) [label={[xshift=-14pt, yshift=-10pt] $v_{3,1}$}] {};
 \node[vertex_blue] (v32) at (1,3) [label={[xshift=13pt, yshift=-10pt] $v_{3,2}$}] {};
 \node[vertex_green] (v41) at (0,2) [label={[xshift=-14pt, yshift=-10pt] $v_{4,1}$}] {};
 \node[vertex_green] (v42) at (1,2) [label={[xshift=13pt, yshift=-10pt] $v_{4,2}$}] {};
 \node[vertex_green] (v51) at (0,1) [label={[xshift=-14pt, yshift=-10pt] $v_{5,1}$}] {};
 \node[vertex_yellow] (v61) at (0,0) [label=below:$v_{6,1}$] {};
 \node[vertex_yellow] (v62) at (1,0) [label=below:$v_{6,2}$] {};
 \node[vertex_yellow] (v63) at (2,0) [label=below:$v_{6,3}$] {};
 
 \draw[very thick] (v11) -- (v12) -- (v13);
 \draw[very thick] (v61) -- (v62) -- (v63);
 \draw[very thick] (v11) -- (v21) -- (v31) -- (v41) -- (v51) -- (v61);
 \draw[very thick] (v12) -- (v22) -- (v32) -- (v42) -- (v62);
 \draw[very thick] (v13) -- (v63);
 \end{scope}
 
 \begin{scope}[shift={(17.5,0)}]
 \node[vertex_red] (v11) at (0,5) [label={[xshift=0pt, yshift=-1pt] $v_{1,1}$}] {};
 \node[vertex_red] (v12) at (1,5) [label={[xshift=0pt, yshift=-1pt] $v_{1,2}$}] {};
 \node[vertex_red] (v13) at (2,5) [label={[xshift=0pt, yshift=-1pt] $v_{1,3}$}] {};
 \node[vertex_blue] (v21) at (0,4) [label={[xshift=-14pt, yshift=-10pt] $v_{2,1}$}] {};
 \node[vertex_blue] (v22) at (1,4) [label={[xshift=13pt, yshift=-10pt] $v_{2,2}$}] {};
 \node[vertex_blue] (v31) at (0,3) [label={[xshift=-14pt, yshift=-10pt] $v_{3,1}$}] {};
 \node[vertex_blue] (v32) at (1,3) [label={[xshift=13pt, yshift=-10pt] $v_{3,2}$}] {};
 \node[vertex_green] (v41) at (0,2) [label={[xshift=-14pt, yshift=-10pt] $v_{4,1}$}] {};
 \node[vertex_green] (v42) at (1,2) [label={[xshift=13pt, yshift=-10pt] $v_{4,2}$}] {};
 \node[vertex_green] (v51) at (0,1) [label={[xshift=-14pt, yshift=-10pt] $v_{5,1}$}] {};
 \node[vertex_green] (v52) at (1,1) [label={[xshift=13pt, yshift=-10pt] $v_{5,2}$}] {};
 \node[vertex_yellow] (v61) at (0,0) [label=below:$v_{6,1}$] {};
 \node[vertex_yellow] (v62) at (1,0) [label=below:$v_{6,2}$] {};
 \node[vertex_yellow] (v63) at (2,0) [label=below:$v_{6,3}$] {};
 
 \draw[very thick] (v11) -- (v12) -- (v13);
 \draw[very thick] (v61) -- (v62) -- (v63);
 \draw[very thick] (v11) -- (v21) -- (v31) -- (v41) -- (v51) -- (v61);
 \draw[very thick] (v12) -- (v22) -- (v32) -- (v42) -- (v52) -- (v62);
 \draw[very thick] (v13) -- (v63);
 \end{scope}
\end{tikzpicture}
\caption{Some internally vertex-color-avoiding connected graphs on $n \in \{ 9, 10, \ldots, 14 \}$ vertices colored with exactly $k = 4$ colors and with ${\scriptsize \Big\lceil} \frac{2k-1}{2k-2}n - \frac{k}{k-1} {\scriptsize \Big\rceil}$ of edges. (In all these cases, the value of $m$ is 1, and the values of $\ell$ are $0, 1, \ldots, 5$, respectively.)}
\label{fig:vert-constr}
\end{figure}
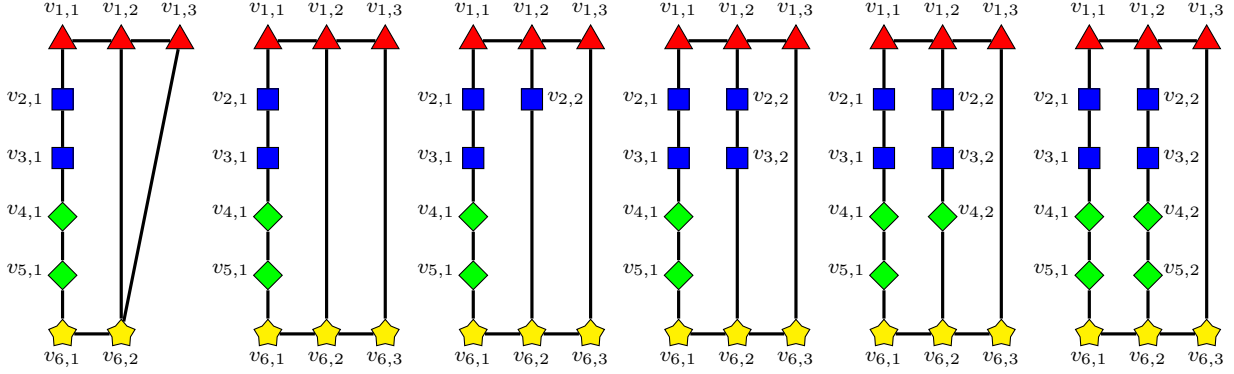

Now we present a polynomial-time approximation algorithm for finding an internally vertex-color-avoiding connected spanning subgraph with minimum number of edges. To simplify the description of the algorithm, we introduce the following subroutines. The subroutine \textbf{DifferentColorNeighbor}$(v, G)$ checks whether the vertex $v$ has a neighbor whose color is different from the color of $v$ in the vertex-colored graph $G$, and the subroutine \textbf{DifferentColorEdge}$(v, G)$ returns an edge which connects $v$ to such a neighbor.

The algorithm is presented as Algorithm~\ref{alg:ivertex}, and it follows a similar scheme as Algorithms~\ref{alg:edge} and~\ref{alg:vertex} --- the only difference is in the first step: in order to ensure internally vertex-color-avoiding connectivity, the initial spanning tree is chosen more carefully. In more detail, Algorithm~\ref{alg:ivertex} does the following. Given an internally vertex-color-avoiding connected graph $G$ colored with a color set $C$, first we pick such a spanning tree of $G$ in which every vertex $v$ has a neighbor whose color is different from that of $v$; it can be shown that such a spanning tree always exists in $G$. Then for any color $c \in C$, if the removal of the vertices of color $c$ disconnects the graph of the so far selected edges, then we contract the components of the graph whose vertex set consists of the vertices which have colors different from $c$ and whose edge set consists of the selected edges. Then we select some edges in the original graph which correspond to the edges of an arbitrary spanning tree in this contracted graph. At the end, the selected edges clearly form an internally vertex-color-avoiding connected spanning subgraph.



\begin{algorithm}[H]
\caption{Finding internally vertex-color-avoiding connected spanning subgraphs} \label{alg:ivertex}
\begin{algorithmic}[1]
\Require An internally vertex-color-avoiding connected graph $G = (V, E)$ colored with a color set $C$.
\Ensure An internally vertex-color-avoiding connected spanning subgraph $G'$ of $G$.

\If{$|C|=1$}
    \State \Return $G$
\Else
\State $E' \gets \emptyset$ \hfill \texttt{\textcolor{black!50}{//~Phase~1}}
\State $G' \gets \text{Graph$(V, E')$}$
\For{$v\in V$}
     \If{not DifferentColorNeighbor$(v, G')$}
         \State $E' \gets E' \cup \big\{ \text{DifferentColorEdge}(v, G) \big\}$
         \State $G' \gets \text{Graph$(V, E')$}$
     \EndIf
\EndFor
\If{$G'$ is not connected} \hfill \texttt{\textcolor{black!50}{//~Phase~2}}
     \State $\mathcal{W} \gets \text{ConnectedComponents}(G')$
     \State $H \gets \text{ContractVertices}(G,\mathcal{W})$
     \State $E' \gets E' \cup \textrm{BeforeContraction} \big( G, \; H, \; \text{SpanningTree}(H) \big)$
     \State $G' \gets \text{Graph$(V, E')$}$
\EndIf
\For{$c\in C$} \hfill \texttt{\textcolor{black!50}{//~Phase~3}}
     \If{$G'_{\overline{c}}$ is not connected}
         \State $\mathcal{W} \gets \text{ConnectedComponents}(G'_{\overline{c}})$
         \State $H \gets \text{ContractVertices}(G_{\overline{c}},\mathcal{W})$
         \State $E' \gets E' \cup \textrm{BeforeContraction} \big( G, \; H, \; \text{SpanningTree}(H) \big)$
         \State $G' \gets \text{Graph$(V, E')$}$
     \EndIf
\EndFor
\EndIf

\State \Return $G'$
\end{algorithmic}
\end{algorithm}

Again, let us note that similarly to Algorithms~\ref{alg:matr}--\ref{alg:vertex}, at the end of Algorithm~\ref{alg:ivertex} we can greedily deselect those edges whose removal from the graph of the so far selected edges does not destroy the internally vertex-color-avoiding property of this graph.

Now we are ready to analyze Algorithm~\ref{alg:ivertex}.

\begin{theorem} \label{thm:ivertexapprox}
 Algorithm~\ref{alg:ivertex} is a polynomial-time $\big( 2 \cdot \frac{2k-2}{2k-1} \big)$-approximation algorithm for the problem of finding an internally vertex-color-avoiding connected spanning subgraph with minimum number of edges in an internally vertex-color-avoiding connected graph whose vertices are colored with exactly $k \in \mathbb{Z}_+$ colors.
\end{theorem}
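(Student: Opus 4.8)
The plan is to establish three things separately, mirroring the proofs of Theorems~\ref{thm:vertexappr} and~\ref{thm:edgeappr}: correctness of the output, the polynomial running time, and the edge bound $\big|E(G')\big| \le 2n-3$, which combined with Theorem~\ref{thm:ivertex} yields the stated factor. The case $k=1$ is handled by the very first line of the algorithm, which returns $G$ itself; for a single color this is the complete graph and hence the unique (thus optimal) internally vertex-color-avoiding connected graph, so the algorithm is exact there, and for the ratio I therefore assume $k \ge 2$.

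For correctness I would first record the supporting fact that, when $k \ge 2$, every vertex $v$ of $G$ has a neighbor of a different color. Otherwise, writing $a$ for the common color of $v$ and all its neighbors, any path from $v$ to a non-neighbor would have its first internal vertex colored $a$, so $v$ would fail to be internally vertex-$a$-avoiding connected to that vertex (and if $v$ had no non-neighbor it would be universal, forcing $k=1$). This guarantees that the calls to DifferentColorEdge in Phase~1 always succeed, so after Phase~1 every vertex has a different-color neighbor; Phase~2 makes $G'$ connected, and Phase~3 makes $G'_{\overline c}$ connected for every color $c$. I would then check that these three properties together imply internal vertex-color-avoiding connectivity: for any two vertices $u,v$ and any color $c$, route $u$ to its non-$c$ neighbor, travel to $v$'s non-$c$ neighbor inside the connected graph $G'_{\overline c}$, and finish at $v$, so that every internal vertex avoids color $c$; moreover, adding edges in later Phase~3 iterations never destroys a connectivity already achieved.

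The heart of the argument is the edge count. First I would observe that Phase~1 produces a \emph{forest}: an edge is added for $v$ only when $v$ has no different-color neighbor in $G'$, and since every edge placed so far is bichromatic, this means $v$ is currently isolated, so the new edge cannot close a cycle. If this forest has $p$ components (each of size at least two, as no vertex stays isolated), it has $n-p$ edges, and Phase~2 adds exactly $p-1$ edges across distinct components; hence after Phase~2 the selected graph is a spanning \emph{tree} $T$ with exactly $n-1$ edges. For Phase~3 I would use monotonicity: when color $c$ is processed the current graph is a supergraph of $T$, so the number of edges it then adds is at most $\big(\text{number of components of } T_{\overline c}\big)-1 \le \sum_{v:\,c(v)=c}\big(d_T(v)-1\big)$, the last inequality being the standard bound for deleting a vertex set from a tree. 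Summing over all colors gives at most $\sum_{v\in V}\big(d_T(v)-1\big)=2(n-1)-n=n-2$ edges in Phase~3, so the output has at most $(n-1)+(n-2)=2n-3$ edges.

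Finally I would combine $\big|E(G')\big|\le 2n-3$ with the lower bound $\mathrm{OPT}\ge \frac{2k-1}{2k-2}n-\frac{k}{k-1}$ from Theorem~\ref{thm:ivertex}: a direct calculation shows $(2n-3)(2k-1)\le 2(2k-1)n-4k$ exactly when $2k\ge 3$, i.e.\ for all $k\ge 2$, which rearranges to $2n-3 \le \frac{2(2k-2)}{2k-1}\big(\frac{2k-1}{2k-2}n-\frac{k}{k-1}\big)\le \frac{2(2k-2)}{2k-1}\,\mathrm{OPT}$, giving the claimed approximation factor. The running-time analysis is routine and identical in spirit to Algorithms~\ref{alg:edge} and~\ref{alg:vertex} --- each phase only computes connected components, contractions and spanning trees --- yielding $O\big(|C|\cdot|V|^2\big)$ time. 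The main obstacle, and the only genuinely new point compared with the vertex-color-avoiding case, is showing that Phases~1 and~2 together still yield a spanning tree rather than some denser graph, since it is this exact count $n-1$ --- not merely $O(n)$ --- that makes the Phase~3 bound collapse to $2n-3$; the forest observation above is what secures it.
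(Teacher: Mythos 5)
Your proposal is correct and follows essentially the same route as the paper's proof: handle $k=1$ exactly, show that Phases~1--2 produce a spanning tree in which every vertex has a differently colored neighbor, bound the Phase~3 additions by $\sum_{v\in V}\big(d_T(v)-1\big)=n-2$ via tree degrees to get $2n-3$ edges, and divide by the lower bound of Theorem~\ref{thm:ivertex}. Your forest argument for Phase~1 (the vertex triggering an edge addition must currently be isolated because every previously selected edge is bichromatic) is a cleaner variant of the paper's contradiction via the last-selected edge of a hypothetical cycle; the only phrasing to tighten is in the correctness step, where ``route $u$ to its non-$c$ neighbor'' should be invoked only when $u$ itself has color $c$ (otherwise $u$ already lies in $G'_{\overline{c}}$ and its Phase-1 neighbor need not avoid $c$), which is exactly the case split the paper carries out.
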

\begin{proof}
 We discuss the time complexity and the correctness of the algorithm separately.
 
 \medskip
 
 \textit{Correctness.} If $k=1$, then by Theorem~\ref{thm:ivertex}, the input of Algorithm~\ref{alg:ivertex} must be a complete graph, and so does the output, thus in this case the algorithm finds an optimal solution.

 \medskip
 
 Now assume $k \ge 2$. First, we show that at the end of Phase~2, the so far selected edges form such a spanning tree of $G$, in which every vertex $v$ has a neighbor whose color is not $c(v)$, where $c(v)$ denotes the color of $v$. To show this, we begin with proving that the edges selected in Phase~1 cannot induce a cycle. Suppose to the contrary that there exists a cycle $v_1 v_2 \ldots v_{\ell}$ for some integer $\ell \ge 3$ such that all the edges $v_1 v_2$, $v_2 v_3$, $\ldots$, $v_{\ell - 1} v_{\ell}$, and $v_{\ell} v_1$ were selected in Phase~1. Without loss of generality, we can assume that among these edges, $v_1 v_2$ was selected last. By the choice of the edges in Phase~1, we know that the color of $v_1$ is different from that of $v_{\ell}$, and the color of $v_2$ is different from that of $v_3$. (Note that if $\ell = 3$, then $v_{\ell} = v_3$.) Since the edge $v_1 v_2$ is the last selected edge of the cycle, this means that at the time of its selection, both $v_1$ and $v_2$ had such neighbors in the graph of the so far selected edges, whose colors are different from theirs. Thus the edge $v_1 v_2$ should not have been selected, which is a contradiction. Therefore, the edges selected in Phase~1 form a forest, and clearly, in Phase~2 this forest is extended to a spanning tree of $G$. Also note that by the internally vertex-color-avoiding connectivity of $G$, every vertex $v$ has a neighbor whose color is different from that of $v$.
 
 By the same reasoning as that for Algorithm~\ref{alg:vertex}, after Phase~3, the graph of the so far selected edges is vertex-color-avoiding connected. To prove that it is internally vertex-color-avoiding connected, we need to show that any vertex $v$ is internally vertex-$c(v)$-avoiding connected to any other vertex $w$. So let $v$ and $w$ be two arbitrary vertices.

 If $w$ is not of color $c(v)$, then in Phase~1, the algorithm selected an edge $uv$ where the color of $u$ is different from $c(v)$. Note that such an edge exists since the input graph $G$ is internally vertex-color-avoiding connected. Since $u$ and $w$ are vertex-$c(v)$-avoiding connected in the graph of the selected edges when the algorithm terminates and neither $u$ nor $w$ are of color $c(v)$, there exists a vertex-$c(v)$-avoiding path between $u$ and $w$ in the graph of the selected edges. Since $v$ is of color $c(v)$, this path cannot contain $v$, so extending this path with the edge $uv$, we obtain an internally vertex-$c(v)$-avoiding path between $v$ and $w$ in the graph of the selected edges at the end of Phase~3.
 
 If $w$ is of color $c(v)$, then in Phase~1, the algorithm selected two edges $u_1 v$ and $u_2 w$ where neither $u_1$ nor $u_2$ is of color $c(v)$. Note that such edges exist since the input graph $G$ is internally vertex-color-avoiding connected, and also note that $u_1$ and $u_2$ are not necessarily distinct vertices. If $u_1 = u_2$, then $v u_1 w$ is an internally vertex-$c(v)$-avoiding path between $v$ and $w$ in the graph of the selected edges when the algorithm terminates. If $u_1 \ne u_2$, then since neither $u_1$ nor $u_2$ is of color $c(v)$, there exists a vertex-$c(v)$-avoiding path between $u_1$ and $u_2$ in the graph of the selected edges at the end of Phase~3. Extending this path with the edges $u_1 v$ and $u_2 w$, we obtain an internally vertex-$c(v)$-avoiding path between $v$ and $w$ in the graph of the selected edges when the algorithm terminates.

 Therefore, the algorithm finds an internally vertex-color-avoiding connected spanning subgraph.
 
 \medskip
 
 \textit{Approximation factor.} Now we prove that the output has at most 
 \[ 2n-3 \le 2 \cdot \frac{2k-2}{2k-1} \cdot \left\lceil \frac{2k-1}{2k-2}n - \frac{k}{k-1} \right\rceil \]
 edges, which is, by Theorem~\ref{thm:ivertex}, at most $2 \cdot \frac{2k-2}{2k-1}$ times as many as the minimum number of edges of an internally vertex-color-avoiding connected graph whose vertices are colored with exactly $k$ colors, implying that Algorithm~\ref{alg:ivertex} is a $\big( 2 \cdot \frac{2k-2}{2k-1} \big)$-approximation algorithm.

 As we showed before, at the end of Phase~2 the so far selected edges form a spanning tree of $G$. Thus, at the end of Phase~2, $n-1$ edges are selected. By the same reasoning as that for Algorithm~\ref{alg:vertex}, at the end of Phase~3, at most $2n-3$ edges are selected. Thus the output has indeed at most $2n-3$ edges.

 \medskip
 
 \textit{Time complexity.} Similarly to Algorithm~\ref{alg:vertex}, Algorithm~\ref{alg:ivertex} also runs in $O \big( |C| \cdot |V|^2 \big)$ time.
\end{proof}

\medskip

As we observed in the proof of Theorem~\ref{thm:ivertexapprox}, if $k=1$ then Algorithm~\ref{alg:ivertex} always finds an optimal solution. Similarly to Algorithm~\ref{alg:vertex}, now we show that the approximation factor of Algorithm~\ref{alg:ivertex} cannot be improved by a more precise analysis. More precisely, for $k \ge 2$, we present some internally vertex-color-avoiding connected graphs $G$, for which the output of the algorithm can have exactly $2n - 3$ edges, while the optimal solution has ${\footnotesize \Big\lceil} \frac{2k-1}{2k-2}n - \frac{k}{k-1} {\footnotesize \Big\rceil}$ edges. For this, let us consider the graph $G' = (V,E')$ constructed in the proof of Theorem~\ref{thm:ivertex} on $n$ vertices, where $(2k-2) \mid (n - 3)$ and $n \ge 4k - 1$ (with the notation used to describe the construction, this means $m \ge 2$ and $\ell = 0$).
 
First, let us find a permutation $w_0, w_1, \ldots, w_{n-1}$ of the vertices, in which any three consecutive vertices are all of different colors. Let
\[ w_{2j(k-1)} \colonequals v_{1,j+1} \]
and
\[ w_{i+2j(k-1)} \colonequals v_{2i,j+1} \]
and
\[ w_{(2j+1)(k-1)} \colonequals v_{2k-2,j+1} \]
for any $i \in \{ 1, \ldots, k-2 \}$ and $j \in \{ 0, 1, \ldots, m-1 \}$. Let
\[ w_{i+(2j+1)(k-1)} \colonequals v_{2i+1,j+1} \]
for any $i \in \{ 1, \ldots, k-2 \}$ and $j \in \{ 0, 1, \ldots, m-2 \}$. Let
\[ w_{(2m-1)(k-1)+1} \colonequals v_{1,m+2} \text{.} \]
Let
\[ w_{i+(2m-1)(k-1)+1} \colonequals v_{2i+1,m} \]
for any $i \in \{ 1, \ldots, k-2 \}$. Finally, let
\[ w_{2m(k-1)+1} \colonequals v_{1,m+1} \]
and
\[ w_{2m(k-1)+2} \colonequals v_{2k-2,m+1} \text{.} \]

Let $G = (V, E' \cup E'')$, where
\[ E'' \colonequals \big\{ w_j w_{j+1} \bigm| j \in \{ 0, 1, \ldots, n-2 \} \big\} \cup \big\{ w_j w_{j+2} \bigm| j \in \{ 0, 1, \ldots, n-3 \} \big\} \text{.} \]
For an example, see Figure~\ref{fig:ivertex-algo}.

By Theorem~\ref{thm:ivertex}, the subgraph $\big( V, E')$ is an optimal solution. However, the output of Algorithm~\ref{alg:ivertex} can also be the subgraph $(V, E'')$: the algorithm can select the edges of $\big\{ w_j w_{j+1} \bigm| j \in \{ 0, 1, \ldots, n-2 \} \big\}$ in Phase~1 (in the order $w_1 w_0$, $w_2 w_1$, $w_3 w_2$, \ldots, $w_{n-1} w_{n-2}$), then since these edges obviously form a Hamiltonian path and thus a spanning tree of $G$, no further edges get selected in Phase~2, and finally, the algorithm selects all the edges of $\big\{ v_j v_{j+2} \bigm| j \in \{ 0, 1, \ldots, n-3 \}$ in Phase~3. Clearly, this output has $2n-3$ edges.

\begin{figure}[H]
\centering
\begin{tikzpicture}[scale=3]
 \tikzstyle{vertex_red}=[draw, shape=regular polygon, regular polygon sides=3, fill=red, minimum size=12pt, inner sep=0]
 \tikzstyle{vertex_blue}=[draw, shape=rectangle, fill=blue, minimum size=8pt, inner sep=0]
 \tikzstyle{vertex_green}=[draw, shape=diamond, fill=green, minimum size=11pt, inner sep=0]
 \tikzstyle{vertex_yellow}=[draw, shape=star, fill=yellow, minimum size=11pt, inner sep=0]

 \footnotesize
 
 \node[vertex_red] (a0) at (90+360/15+0*360/15:1) [label={[shift={($(0,-0.4)+(90:0.5)$)}] $v_{1,1}$}] {};
 \node[vertex_blue] (a1) at (90+360/15+1*360/15:1) [label={[shift={($(0,-0.4)+(120:0.8)$)}] $v_{2,1}$}] {};
 \node[vertex_green] (a2) at (90+360/15+2*360/15:1) [label={[shift={($(0,-0.4)+(150:0.8)$)}] $v_{4,1}$}] {};
 \node[vertex_yellow] (a3) at (90+360/15+3*360/15:1) [label={[shift={($(0,-0.4)+(180:0.8)$)}] $v_{6,1}$}] {};
 \node[vertex_blue] (a4) at (90+360/15+4*360/15:1) [label={[shift={($(0,-0.4)+(210:0.8)$)}] $v_{3,1}$}] {};
 \node[vertex_green] (a5) at (90+360/15+5*360/15:1) [label={[shift={($(0,-0.4)+(240:0.8)$)}] $v_{5,1}$}] {};
 \node[vertex_red] (a6) at (90+360/15+6*360/15:1) [label={[shift={($(0,-0.4)+(260:0.8)$)}] $v_{1,2}$}] {};
 \node[vertex_blue] (a7) at (90+360/15+7*360/15:1) [label={[shift={($(0,-0.4)+(280:0.8)$)}] $v_{2,2}$}] {};
 \node[vertex_green] (a8) at (90+360/15+8*360/15:1) [label={[shift={($(0,-0.4)+(300:0.8)$)}] $v_{4,2}$}] {};
 \node[vertex_yellow] (a9) at (90+360/15+9*360/15:1) [label={[shift={($(0,-0.4)+(330:0.8)$)}] $v_{6,2}$}] {};
 \node[vertex_red] (a10) at (90+360/15+10*360/15:1) [label={[shift={($(0,-0.4)+(0:0.8)$)}] $v_{1,4}$}] {};
 \node[vertex_blue] (a11) at (90+360/15+11*360/15:1) [label={[shift={($(0,-0.4)+(20:0.8)$)}] $v_{3,2}$}] {};
 \node[vertex_green] (a12) at (90+360/15+12*360/15:1) [label={[shift={($(0,-0.4)+(45:0.8)$)}] $v_{5,2}$}] {};
 \node[vertex_red] (a13) at (90+360/15+13*360/15:1) [label={[shift={($(0,-0.4)+(60:0.8)$)}] $v_{1,3}$}] {};
 \node[vertex_yellow] (a14) at (90+360/15+14*360/15:1) [label={[shift={($(0,-0.4)+(90:0.5)$)}] $v_{6,3}$}] {};
 \draw[very thick] (a0) -- (a1) -- (a2) -- (a3) -- (a4) -- (a5) -- (a6) -- (a7) -- (a8) -- (a9) -- (a10) -- (a11) -- (a12) -- (a13) -- (a14);
 \draw[very thick] (a0) to [bend right=60] (a2) to [bend right=60] (a4) to [bend right=60] (a6) to [bend right=60] (a8) to [bend right=60] (a10) to [bend right=60] (a12) to [bend right=60] (a14);
 \draw[very thick] (a1) to [bend right=60] (a3) to [bend right=60] (a5) to [bend right=60] (a7) to [bend right=60] (a9) to [bend right=60] (a11) to [bend right=60] (a13);
 \draw[very thick,dashed] (a0) to [bend left=60] (a1);
 \draw[very thick,dashed] (a0) to [bend left=0] (a6);
 \draw[very thick,dashed] (a1) to [bend left=10] (a4);
 \draw[very thick,dashed] (a2) to [bend left=20] (a4);
 \draw[very thick,dashed] (a2) to [bend left=30] (a5);
 \draw[very thick,dashed] (a3) to [bend left=20] (a5);
 \draw[very thick,dashed] (a3) to [bend left=0] (a9);
 \draw[very thick,dashed] (a6) to [bend left=60] (a7);
 \draw[very thick,dashed] (a6) to [bend left=0] (a13);
 \draw[very thick,dashed] (a7) to [bend left=10] (a11);
 \draw[very thick,dashed] (a8) to [bend left=0] (a11);
 \draw[very thick,dashed] (a8) to [bend left=10] (a12);
 \draw[very thick,dashed] (a9) to [bend left=10] (a12);
 \draw[very thick,dashed] (a9) to [bend left=0] (a14);
 \draw[very thick,dashed] (a10) to [bend left=0] (a13);
 \draw[very thick,dashed] (a10) to [bend left=0] (a14);
 \draw[very thick,dashed] (a13) to [bend left=60] (a14);
\end{tikzpicture}
\caption{An internally vertex-color-avoiding connected graph colored with $k=4$ colors and on $n = 15$ vertices, which contains an internally vertex-color-avoiding connected spanning subgraph with ${\footnotesize \Big\lceil} \frac{2k-1}{2k-2} n - \frac{k}{k-1} {\footnotesize \Big\rceil} = 17$ edges --- for example, the dashed edges form such a subgraph --- and an internally vertex-color-avoiding connected spanning subgraph with $2n-3 = 27$ edges --- that subgraph is spanned by the remaining (i.e.\ solid) edges. The output of Algorithm~\ref{alg:ivertex} can be this latter subgraph.}
\label{fig:ivertex-algo}
\end{figure}
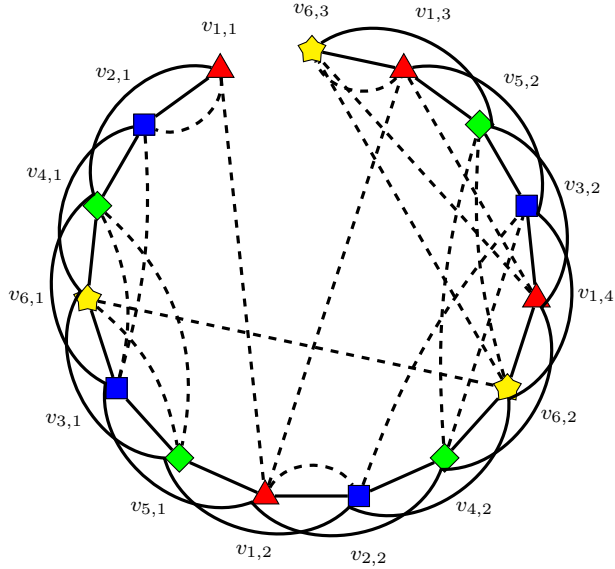

\medskip

Let us consider those internally vertex-color-avoiding connected graphs, which do not contain any internally vertex-color-avoiding connected spanning subgraph except themselves. By the proof of Theorem~\ref{thm:ivertexapprox}, such a graph $G=(V,E)$ on $n$ vertices has at most $2n-3$ edges. In addition, by the proof of Theorem~\ref{thm:ivertexapprox}, if $k = 1$, then such a graph $G$ is a complete graph. Now we show that if $k \in \{ 2, 3 \}$, then the upper bound $|E| \le 2n-3$ is tight. To see this for $k=2$, let us define $G$ as follows.
Let
\[ C = \{ 0, 1 \} \text{,} \]
and let
\[ V = \{ v_0, v_1, \ldots, v_{n-1} \} \]
where $v_0$ is of color 0 and all the other vertices are of color 1. Let
\[ E = \big\{ v_0 v_j \mid j \in \{ 1, 2, \ldots, n-1 \} \big\} \cup \big\{ v_j v_{j+1} \mid j \in \{ 1, 2, \ldots, n-2 \} \big\} \text{.} \]
For an example, see Figure~\ref{fig:ivertex-sharp_k2}. Note that if $k = 3$, then the upper bound $|E| \le 2n-3$ is tight which the same construction shows that was given for the case of vertex-color-avoiding connectivity before Corollary~\ref{corollary:vertex}; for an example, see Figure~\ref{fig:vertex-sharp}.

\begin{figure}[H]
\centering
\begin{tikzpicture}[scale=1.2]
 \tikzstyle{vertex_red}=[draw, shape=regular polygon, regular polygon sides=3, fill=red, minimum size=12pt, inner sep=0]
 \tikzstyle{vertex_blue}=[draw, shape=rectangle, fill=blue, minimum size=8pt, inner sep=0]
 \tikzstyle{vertex_green}=[draw, shape=diamond, fill=green, minimum size=11pt, inner sep=0]
 \tikzstyle{vertex_yellow}=[draw, shape=star, fill=yellow, minimum size=11pt, inner sep=0]
 
 \node[vertex_red] (a0) at (0,0) [label={above left:$v_0$}] {};
 \node[vertex_blue] (a1) at (90+1*72:1) [label={left:$v_1$}] {};
 \node[vertex_blue] (a2) at (90+2*72:1) [label={below:$v_2$}] {};
 \node[vertex_blue] (a3) at (90+3*72:1) [label={below:$v_3$}] {};
 \node[vertex_blue] (a4) at (90+4*72:1) [label={right:$v_4$}] {};
 \node[vertex_blue] (a5) at (90+5*72:1) [label={above:$v_5$}] {};
 \draw[very thick] (a1) -- (a2) -- (a3) -- (a4) -- (a5);
 \draw[very thick] (a0) -- (a1);
 \draw[very thick] (a0) -- (a2);
 \draw[very thick] (a0) -- (a3);
 \draw[very thick] (a0) -- (a4);
 \draw[very thick] (a0) -- (a5);
\end{tikzpicture}
\caption{An internally vertex-color-avoiding connected graph on $n = 6$ vertices colored with exactly $k = 2$ colors having $2n-3 = 9$ edges and having the property that none of the edges can be removed such that the graph remains internally vertex-color-avoiding connected.}
\label{fig:ivertex-sharp_k2}
\end{figure}
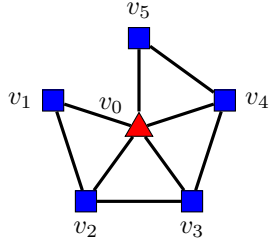

Therefore, we obtained the following.

\begin{corollary}\label{corollary:ivertex}
 Let $G=(V,E)$ be an internally vertex-color-avoiding connected graph on $n$ vertices such that $G-e$ is not internally vertex-color-avoiding connected for any $e\in E$. Then if the vertices are colored with exactly $k=1$ colors then $|E| = \binom{n}{2}$, and if $k \ge 2$ then $|E| \le 2n-3$. Moreover, if $k \in \{ 2,3 \}$, then this upper bound is sharp.
\end{corollary}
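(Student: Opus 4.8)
The plan is to split the statement into its three regimes — $k=1$, the upper bound for $k\ge 2$, and sharpness for $k\in\{2,3\}$ — and dispatch them in increasing order of difficulty, reusing Theorems~\ref{thm:ivertex} and~\ref{thm:ivertexapprox}. The case $k=1$ is immediate: by the $k=1$ part of Theorem~\ref{thm:ivertex} (more precisely, the observation in its proof that the only internally vertex-color-avoiding connected graphs on a monochromatic vertex set are the complete graphs), a $1$-colored internally vertex-color-avoiding connected $G$ must be complete, so $|E|=\binom{n}{2}$; the edge-criticality hypothesis is then vacuously satisfied.

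For the bound $|E|\le 2n-3$ when $k\ge 2$, the key is to recast the criticality hypothesis. First I would note that internal vertex-color-avoiding connectivity is monotone under edge addition: any path avoiding internal vertices of color $c$ that exists in a subgraph persists in every supergraph on the same vertex set. Hence ``$G-e$ is not internally vertex-color-avoiding connected for every $e\in E$'' is equivalent to ``$G$ has no internally vertex-color-avoiding connected spanning subgraph other than itself'': if some proper spanning subgraph $G'$ had the property, then deleting any edge of $E\setminus E(G')$ would leave a supergraph of $G'$, still internally vertex-color-avoiding connected, contradicting criticality. Running Algorithm~\ref{alg:ivertex} on $G$ produces an internally vertex-color-avoiding connected spanning subgraph with at most $2n-3$ edges (the $k\ge 2$ case of Theorem~\ref{thm:ivertexapprox}); since the only such subgraph is $G$ itself, we conclude $|E|\le 2n-3$.

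For sharpness I would treat $k=2$ and $k=3$ separately. For $k=2$ I take the star-plus-path graph defined just before the statement (Figure~\ref{fig:ivertex-sharp_k2}), which has $(n-1)+(n-2)=2n-3$ edges. Internal connectivity is verified directly: for the color of $v_0$ the path $v_1v_2\cdots v_{n-1}$ avoids $v_0$ as an internal vertex, and for the common leaf color any two leaves are joined through $v_0$. Criticality is checked edge by edge: deleting a spoke $v_0v_j$ leaves $v_j$ with only same-color neighbors, so $v_j$ loses its internal-color-$1$-avoiding connection to the far leaves, while deleting a path edge $v_jv_{j+1}$ destroys the only $v_0$-avoiding route between the two sides. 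For $k=3$ I would reuse the graph of Figure~\ref{fig:vertex-sharp} that already witnessed sharpness for plain vertex-color-avoiding connectivity in Corollary~\ref{corollary:vertex}, again with $2n-3$ edges. Here internal connectivity follows by observing that deleting the vertices of any fixed residue class modulo $3$ leaves a path alternating distance-$1$ and distance-$2$ edges through the surviving vertices; and criticality comes for free, since that graph is already edge-critical for the weaker plain notion and ``not vertex-color-avoiding connected'' implies ``not internally vertex-color-avoiding connected.''

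The main obstacle is the sharpness part, and in particular the edge-criticality of the $k=2$ star-plus-path graph: unlike the $k=3$ example it cannot be inherited from the plain-connectivity results — there the critical $2$-colored graphs are merely trees with $n-1$ edges — so it must be checked directly against the internal definition, edge type by edge type. The internal-connectivity verification of the $k=3$ construction, via the alternating distance-$1$/distance-$2$ traversal of a color-class complement, is the other step requiring care.
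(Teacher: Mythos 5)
Your proposal is correct and follows essentially the same route as the paper: the $k=1$ case via the completeness observation from Theorem~\ref{thm:ivertex}, the bound $|E|\le 2n-3$ by feeding the edge-critical graph to Algorithm~\ref{alg:ivertex} (equivalently, the proof of Theorem~\ref{thm:ivertexapprox}) after translating edge-criticality into the absence of proper internally vertex-color-avoiding connected spanning subgraphs, and sharpness via the star-plus-path graph for $k=2$ and the reuse of the Figure~\ref{fig:vertex-sharp} construction for $k=3$. Your added verification details (the edge-by-edge criticality check for $k=2$ and the inheritance of criticality from the plain notion for $k=3$) are sound and only make explicit what the paper leaves to the reader.
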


By Corollary~\ref{corollary:ivertex}, a simple greedy algorithm (i.e.\ an analogous version of the one described in Remark~\ref{remark:simple_alg}) also performs in polynomial time and with the same approximation factor as Algorithm~\ref{alg:ivertex}.

\section{Conclusions}

In this article, we considered the problems of finding edge-, vertex-, and internally vertex-color-avoiding connected spanning subgraphs with minimum number of edges and finding courteously colored rank preserving restrictions of a matroid to a set of minimum size. We proved that all of these problems are NP-hard and provided polynomial-time approximation algorithms for them. We also gave sharp lower bounds on the number of edges in edge-, vertex- and internally vertex-color-avoiding connected graphs and on the number of elements in courteously colored matroids.

Some possible generalizations of this framework are $\ell$-color-avoiding $k$-edge- and $k$-vertex-connectivity. An edge-colored graph $G$ is called \emph{edge-$\ell$-color-avoiding $k$-vertex-} or \emph{$k$-edge-connected} if after the removal of the edges of any $\ell$ colors from $G$, the remaining graph is $k$-vertex- or $k$-edge-connected, respectively, for some $k, \ell \in \mathbb{Z}_+$. The notion of \emph{vertex-$\ell$-color-avoiding $k$-vertex-} and \emph{$k$-edge-connectivity}, and the notion of \emph{internally vertex-$\ell$-color-avoiding $k$-vertex-} or \emph{$k$-edge-connectivity} can be defined analogously. 

Then we can consider the problems of finding $\ell$-color-avoiding $k$-edge- or $k$-vertex-connected spanning subgraphs with minimum number of edges, or we can also investigate an edge-weighted version of them in which instead of minimizing the number of edges, we want to minimize the sum of the edge-weights of such spanning subgraphs. Clearly, these problems are also NP-hard since they are NP-hard even in the case $k = \ell = 1$ without edge-weights. However, developing approximation and fixed-parameter tractable algorithms for them seems to be an interesting problem that is subject of future research.

\paragraph{Acknowledgement.} The authors would like to express their gratitude to Roland Molontay for useful conversations and for his support during the research process. The authors are also grateful to Krist\'{o}f B\'{e}rczi for his comments regarding the manuscript. The research presented in this paper was supported by the Ministry of Culture and Innovation and the National Research, Development and Innovation Office within the framework of the Artificial Intelligence National Laboratory Programme.
The research of Kitti Varga was supported by the Hungarian National Research, Development and Innovation Office -- NKFIH, grant number FK128673. The reserach of J\'{o}zsef Pint\'{e}r was funded by the New National Excellence Program of the of the National Research, Development and Innovation Fund (\'{U}NKP-23-3).
\begin{bibdiv}
\begin{biblist}
\bib{previouswork1}{article}{
 title={Error and attack tolerance of complex networks},
 author={R.~Albert},
 author={H.~Jeong},
 author={A.-L.~Barab\'{a}si},
 journal={Nature},
 volume={406},
 pages={378--382},
 date={2000},
}

\bib{previouswork2}{book}{
 title={Network Science},
 author={A.-L.~Barab\'{a}si},
 author={M.~P\'{o}sfai},
 publisher={Cambridge University Press},
 date={2016},
}

\bib{previouswork3}{article}{
 title={Network robustness and fragility: percolation on random graphs},
 author={D.~S.~Callaway},
 author={M.~E.~Newman},
 author={S.~H.~Strogatz},
 author={D.~J.~Watts},
 journal={Physical Review Letters},
 volume={85},
 number={25},
 pages={5468--5471},
 date={2000},
}

\bib{cheriyan}{inproceedings}{
 title={Approximating minimum-size $k$-connected spanning subgraphs via matching}, 
 author={J.~Cheriyan},
 author={R.~Thurimella},
 booktitle={Proceedings of 37th Conference on Foundations of Computer Science}, 
 pages={292--301},
 publisher={IEEE},
 year={1996},
}

\bib{coullard1996independence}{article}{
 title={Independence and port oracles for matroids, with an application to computational learning theory},
 author={C.~R.~Coullard},
 author={L.~Hellerstein},
 journal={Combinatorica},
 volume={16},
 number={2},
 pages={189--208},
 year={1996},
}

\bib{seymour}{article}{
 title={Cut coloring and circuit covering},
 author={M.~DeVos},
 author={T.~Johnson},
 author={P.~Seymour},
 eprint={\url{https://web.math.princeton.edu/~pds/papers/cutcolouring/paper.pdf}},
 year={2006},
}

\bib{gabow}{article}{
 title={Approximating the smallest $k$-edge connected spanning subgraph by LP-rounding},
 author={H.~N.~Gabow},
 author={M.~X.~Goemans},
 author={É.~Tardos},
 author={D.~P.~Williamson},
 journal={Networks},
 volume={53},
 number={4},
 pages={345--357},
 date={2009},
}

\bib{hamilton}{book}{
 title={Computers and Intractability: A Guide to the Theory of NP-Completeness},
 author={M.~R.~Garey},
 author={D.~S.~Johnson},
 publisher={W.~H.~Freeman and Company},
 date={1979},
}

\bib{hamilton-subcubic}{article}{
 title={Some simplified NP-complete graph problems},
 author={M.~R.~Garey},
 author={D.~S.~Johnson},
 author={L.~Stockmeyer},
 journal={Theoretical Computer Science},
 volume={1},
 number={3},
 pages={237--267},
 date={1976},
}



\bib{hausmann1981algorithmic}{incollection}{
 title={Algorithmic versus axiomatic definitions of matroids},
 author={D.~Hausmann},
 author={B.~Korte},
 booktitle={Mathematical Programming at Oberwolfach},
 series={Mathematical Programming Studies},
 volume={14},
 publisher={Springer},
 pages={98--111},
 year={1981},
}

\bib{vempala}{article}{
 title={A 4/3-approximation algorithm for the minimum 2-edge connected subgraph problem},
 author={C.~Hunkenschröder},
 author={S.~Vempala},
 author={A.~Vetta},
 journal={ACM Transactions on Algorithms},
 volume={15},
 number={4},
 pages={article no.~55},
 date={2019},
}

\bib{Krause3}{article}{
 title={Bond and site color-avoiding percolation in scale free networks},
 author={A.~Kadovi\'{c}},
 author={S.~M.~Krause},
 author={G.~Caldarelli},
 author={V.~Zlati\'{c}},
 journal={Physical Review E},
 volume={98},
 number={6},
 pages={article no.~062308},
 date={2018},
}

\bib{KhullerVishkin}{article}{
 title={Biconnectivity approximations and graph carvings},
 author={S.~Khuller},
 author={U.~Vishkin},
 journal={Journal of the ACM},
 volume={41},
 number={2},
 pages={214--235},
 date={1994},
}

\bib{Krause1}{article}{
 title={Hidden connectivity in networks with vulnerable classes of nodes},
 author={S.~M.~Krause},
 author={M.~M.~Danziger},
 author={V.~Zlati\'{c}},
 journal={Physical Review X},
 volume={6},
 number={4},
 pages={article no.~041022},
 date={2016},
}

\bib{Krause2}{article}{
 title={Color-avoiding percolation},
 author={S.~M.~Krause},
 author={M.~M.~Danziger},
 author={V.~Zlati\'{c}},
 journal={Physical Review E},
 volume={96},
 number={2},
 pages={article no.~022313},
 date={2017},
}

\bib{krysta}{inproceedings}{
 title={Approximation algorithms for minimum size 2-connectivity problems},
 author={P.~Krysta},
 author={V.~S.~A.~Kumar},
 booktitle={STACS 2001: Proceedings of the 18th Annual Symposium on Theoretical Aspects of Computer Science},
 series={Lecture Notes in Computer Science},
 volume={2010},
 publisher={Springer},
 pages={431--442},
 date={2001},
}

\bib{lichev}{article}{
 title={Color-avoiding percolation of random graphs: between the subcritical and the intermediate regime},
 author={L.~Lichev},
 journal={Discrete Mathematics},
 volume={347},
 number={1},
 pages={article no.~113713},
 year={2024},
}

\bib{lichevschapira}{article}{
 title={Color-avoiding percolation on the Erd\H{o}s-R\'{e}nyi random graph},
 author={L.~Lichev},
 author={B.~Schapira},
 eprint={\href{https://arxiv.org/abs/2211.16086}{\texttt{arXiv:\allowbreak2211.16086 [math.PR]}}},
 date={2022},
}

\bib{VargaMolontay}{inproceedings}{
 title={On the complexity of color-avoiding site and bond percolation},
 author={R.~Molontay},
 author={K.~Varga},
 booktitle={SOFSEM 2019: Theory and Practice of Computer Science},
 publisher={Springer},
 pages={354--367},
 date={2019},
}

\bib{narayan}{article}{
 title={A 17/12-approximation algorithm for 2-vertex-connected spanning subgraphs on graphs with minimum degree at least 3},
 author={V.~V.~Narayan},
 eprint={\href{https://arxiv.org/abs/1612.04790}{\texttt{arXiv:1612.04790 [cs.DS]}}},
 date={2016},
}

\bib{previouswork4}{book}{
 title={Networks},
 author={M.~Newman},
 publisher={Oxford University Press},
 date={2018},
}

\bib{diplomamunka}{thesis}{
 title={Extremal problems of color-avoiding connectivity},
 author={J.~Pint\'{e}r},
 type={Master's Thesis},
 organization={Budapest University of Technology and Economics},
 eprint={\url{https://math.bme.hu/~pinterj/mscthesis.pdf}},
 date={2022},
}

\bib{rath}{article}{
 title={Color-avoiding percolation in edge-colored Erd\H{o}s--R\'{e}nyi graphs},
 author={B.~R\'{a}th},
 author={K.~Varga},
 author={P.~T.~Fekete},
 author={R.~Molontay},
 eprint={\href{https://arxiv.org/abs/2208.12727}{\texttt{arXiv:2208.12727 [math.PR]}}},
 date={2022},
}

\bib{book:recski}{book}{
 title={Mathroid Theory and its Applications in Electric Network Theory and in Statics},
 series={Algorithms and Combinatorics},
 author={A.~Recski},
 publisher={Springer},
 date={1989},
}

\bib{robinson1980computational}{article}{
 title={The computational complexity of matroid properties},
 author={G.~C.~Robinson},
 author={D.~A.~Welsh},
 journal={Mathematical Proceedings of the Cambridge Philosophical Society},
 volume={87},
 number={1},
 pages={29--45},
 year={1980},
}
\end{biblist}
\end{bibdiv}
\end{document}